\newtheorem{mydef}{Definition}
\newtheorem{prop}{Proposition}
\newtheorem{lemma}{Lemma}
\newtheorem{lem}[lemma]{Lemma}
\newtheorem{thm}{Theorem}
\newtheorem{theorem}[thm]{Theorem}
  \newcommand{\thmtext}{If $A\subseteq T_2(\F_q)$ satisfies $e\in A, A=A^{-1}$, and $|A^3|\leqslant K|A|$, then either
\begin{enumerate}
\item there is an $\F^*$-potent group $H$ such that $|A^2\cap H|\gg K^{O(1)}|A|$, or
\item there is a subgroup $U\leqslant U_2(\F_q)$ such that $U\subseteq A^{O(1)}$ and $\gen A/U$ is abelian.
\end{enumerate}}
\newtheorem*{thm:upp2-Fq}{Theorem~\ref{thm:upp2-Fq}}
\newtheorem*{lem:reduction-for-few-ratios}{Lemma~\ref{lem:reduction-for-few-ratios}}
  \newtheorem*{lem:small-fibers}{Lemma~\ref{lem:small-fibers}}
  \newtheorem*{lem:cases-for-upp2}{Lemma~\ref{lem:cases-for-upp2}}
  \newtheorem*{prop:Wg=Wg'}{Proposition~\ref{prop:Wg=Wg'}}
    \newtheorem*{thm:energyt2}{Theorem~\ref{thm:energyt2}}
    \newtheorem*{thm:energyh}{Theorem~\ref{thm:energyh}}
\newcommand{\F}{\ensuremath{\mathbb{F}}}
\newcommand{\C}{\ensuremath{\mathbb{C}}}
\newcommand{\aff}{\ensuremath{\mathrm{Aff}}}
\DeclareMathOperator{\stab}{Stab}
\renewcommand{\epsilon}{\varepsilon}
\definecolor{fxtarget}{rgb}{0.8000,0.0000,0.0000}
\newtheorem*{rep@theorem}{\rep@title}
\newcommand{\newreptheorem}[2]{%
\newenvironment{rep#1}[1]{%
 \def\rep@title{#2 \ref{##1}}%
 \begin{rep@theorem}}%
 {\end{rep@theorem}}}
\newcommand{\gen}[1]{\ensuremath{\left\langle #1\right\rangle}} 
\title{Growth in Some Finite Three-Dimensional Matrix Groups}
\author{Brendan Murphy and James Wheeler}
\newcommand{\cP}{\mathcal{P}}
\newcommand{\bpm}{\begin{pmatrix}}
\newcommand{\epm}{\end{pmatrix}}
\begin{document}

\maketitle
\begin{abstract}
    We study the growth of product sets in some finite three-dimensional matrix groups.
    In particular, we prove two results about the group of $2\times 2$ upper triangular matrices over arbitrary finite fields: a product set estimate using techniques from multiplicative combinatorics, and an energy estimate using incidence geometry.
    The energy method gives better quantitative results, but only applies to small sets.
    We also prove an energy result for the Heisenberg group.
\end{abstract}

\section{Introduction}

The study of growth in groups is a major area of research in arithmetic combinatorics.
We will use two different methods to look at growth in the group of upper triangular $2\times 2$ matrices over a finite field $\F_q$, and then use one of these methods to look at the Heisenberg group.

One of the oldest questions in this area is  the Erd\H{o}s–Szemer\'edi sum-product conjecture, which is:  every finite set of the integers grows under either addition or multiplication.
That is, for all $\varepsilon>0$ and all sufficiently large finite sets $A\subseteq\mathbb{Z}$, we have
\[
\max(|A+A|,|A\cdot A|)\geqslant |A|^{2-\varepsilon}.
\]
The same question of when a set grows under either addition or multiplication has been studied over other rings and fields, as well as for maps combining addition and multiplication.

Questions have also been asked about growth in other algebraic structures, especially groups. Growth in groups pertains to, for a finite subset $A$ of a group, questions about the size of sets such as $A^k$ or $(A\cup A^{-1}\cup\{e\})^k$ and indeed how these increase as $k$ increases. 
Alternatively, we seek to classify when a subset $A$ of a group does not grow, meaning that $A$ has small tripling: $|A^3|\leqslant K|A|$.
This is part of a trend in arithmetic combinatorics of relaxing an algebraic property or structure and asking what structure remains, in this case relaxing the closure property of subgroups and instead just requiring we are ``nearly closed'' (in a sense that will be made more clear in the following definition).
This idea was formalised with the definition of a $K$-approximate group due to Tao~\cite{tao2006product}.
\begin{mydef}
Let $G$ be a group and $K\geqslant 1$. A non-empty subset $A\subset G$ is a $K$-approximate subgroup of $G$ if:
\begin{itemize}
    \item  It is symmetric, that is if ${ g\in A}$ then ${ g^{-1}\in A}$, and it contains the identity.
\item There exists a subset ${ X\subset G}$ of cardinality ${ |X|\leqslant K}$ such that $ A\cdot A\subset X\cdot A$.
\end{itemize}\end{mydef}
When $K=1$, an approximate group is a subgroup. If a set has small tripling then $(A\cup A^{-1}\cup\{e\})^2$ is a $K^{O(1)}$-approximate group (by Lemma~\ref{lem:PR-ineq}).
As such, the question of when finite subsets of a group grow is equivalent to classifying approximate groups.

In this paper, we study the growth of subsets of the group upper triangular $2\times 2$ matrices over a finite field $\F_q$, as a model case for growth in the group of $n\times n$ upper triangular matrices and other linear groups.
While matrix groups have only one operation (rather than the two from the Erd\H{o}s–Szemer\'edi conjecture),  matrix multiplication involves both additive and multiplicative operations, which suggests that subsets of matrix groups should typically grow.

The question of classifying approximate groups is central to additive combinatorics, and has been studied extensively, for example by Freiman who classified approximate subgroups of the integers  \cite{Freiman73}  and Green and Ruzsa \cite{GR07} who generalised Freiman's result to all abelian groups.

Some matrix groups that have been studied in detail include $G = SL_2(\F_p)$ (Helfgott  \cite{helfgott2005growth}), $SL_3(\F_p)$ (Helfgott \cite{helfgott2008growth}), and $SL_n(\F_p) $ (Pyber-Szabó~\cite{pyber2010growth} and
Breuillard-Green-Tao ~\cite{breuillard2010linear}). Tointon's book~\cite{Tointonnotes2019} collects many of these ideas together.
Of particular note to our paper are the following two results.
We write $\gen{A}$ for the group generated by $A$. 
\begin{theorem}[Gill and Helfgott, Theorem 1 \cite{Gilhelf10}]
\label{thm:GH-soluble}
 Let $A$ be a subset of $GL_n(\F_p)$ such that $\gen{A}$ is solvable.
Then, for every $K \geqslant 1$, either
\begin{itemize}
 \item  $|A^3|\geqslant K|A|$, or else
 \item there is a unipotent subgroup $U_R$, a solvable group $S$, and an integer $k$ depending only on $n$ such that
 \begin{itemize}
    \item $U_R$ and $S$ are both normal in $\gen{A}$, and $S/U_R$ is nilpotent,
    \item $A^k$ contains $U_R$, and
    \item $|A^k \cap S| \geqslant K^{-O_n(1)}|A|$.
 \end{itemize}
\end{itemize}
\end{theorem}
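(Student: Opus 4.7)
The plan is to reduce to the case of (essentially) upper triangular matrices via Lie--Kolchin and then extract the structure from the abelian quotient. Since $\gen A$ is solvable, the Lie--Kolchin theorem applied over $\overline{\F_p}$ shows that, after conjugation, a normal subgroup of bounded index in $\gen A$ sits inside a Borel subgroup $B$ of $GL_n(\overline{\F_p})$; passing to this subgroup loses only a factor depending on $n$. Write $B = T \ltimes U$ with $T$ the maximal torus (diagonal matrices) and $U$ the unipotent radical (strictly upper triangular matrices), and let $\pi : B \to T$ be the canonical projection. The image $\pi(\gen A)$ is abelian and $\ker\pi \cap \gen A \leqslant U$ is unipotent, so the strategy breaks into an ``abelian quotient'' piece controlled by $\pi$ and a ``unipotent fibre'' piece inside $U$.

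Next, I would feed the small-tripling hypothesis into this decomposition. By the Pl\"unnecke--Ruzsa inequality (Lemma~\ref{lem:PR-ineq}), $|A^k| \leqslant K^{O_k(1)}|A|$ for every fixed $k$, and $\pi(A)$ is a $K^{O(1)}$-approximate subgroup of the abelian group $T$. Applying pigeonhole to the fibres of $\pi|_{A^2}$ yields some $t \in T$ with $|A^2 \cap \pi^{-1}(t)| \geqslant K^{-O(1)}|A|$; multiplying on the left by a single element of this fibre gives $|A^4 \cap U| \geqslant K^{-O(1)}|A|$. Thus a bounded power of $A$ already captures a large piece of the unipotent radical, and taking $S$ to be the preimage under $\pi$ of a suitably structured (Freiman-type) subset of $\pi(A)$ yields the required lower bound $|A^k \cap S| \geqslant K^{-O_n(1)}|A|$; the nilpotency of $S/U_R$ will then follow once $U_R$ absorbs enough of $U$ that the quotient embeds into a torus.

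The main obstacle, and most substantive step, is exhausting an entire normal unipotent subgroup $U_R \leqslant U$ inside a bounded power $A^k$. The strategy is a pivoting argument that exploits the semisimple action of $T$ on $U$: elements of $A$ project to semisimple elements of $T$ which act diagonally on the root spaces of the Lie algebra $\mathfrak{u}$, so conjugating seed unipotent elements from $A^4 \cap U$ by elements of $A$ produces many new unipotent elements while staying inside a bounded power of $A$. Because $U$ is nilpotent, one works layer-by-layer along its lower central series, where each layer is an $\F_p$-vector space and an additive-combinatorics argument (of Freiman--Ruzsa flavour, combined with the torus action to kill non-trivial coefficients) upgrades the large seed to the full layer. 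The delicate point is to guarantee that the $U_R$ assembled across layers is actually normal in $\gen A$ rather than merely in the subgroup we started with; this is forced by iterating the construction until it stabilises under conjugation by $A$, at the cost of absorbing the increase into the constant $k$ depending on $n$.
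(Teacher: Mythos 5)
You should first note that this statement is not proved in the paper at all: it is Gill--Helfgott's Theorem~1, quoted from \cite{Gilhelf10} as background, and its actual proof is a long paper in its own right. So the only question is whether your sketch would stand on its own, and it has a concrete gap at its central step. You claim that pigeonholing the fibres of $\pi|_{A^2}$ gives some $t\in T$ with $|A^2\cap\pi^{-1}(t)|\geqslant K^{-O(1)}|A|$, and hence $|A^4\cap U|\geqslant K^{-O(1)}|A|$. This does not follow from small tripling: the orbit--stabiliser inequality only gives a fibre of size $\geqslant |A|/|\pi(A)|$, and $|\pi(A)|$ can be comparable to $|A|$. For instance, if $A$ is the full diagonal torus (so $K=1$), every fibre of $\pi$ on $A^2$ is a single point and $A^4\cap U=\{e\}$, so both of your displayed conclusions fail. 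The case in which the toral projection is large is precisely the hard case of the theorem; there one must show that the unipotent coordinates of $A$ are forced to be coherent (so that $A$ essentially lies near a conjugate of a nilpotent group), and in Gill--Helfgott this is done by a pivoting argument powered by sum--product estimates over $\F_p$ --- the step where primality of $p$ is essential, and the reason the present paper needs new ideas (subfield obstructions, Proposition~\ref{prop:Fq-sum-product}) even to handle $T_2(\F_q)$. Your sketch never confronts this case and never uses that the field is $\F_p$, so the engine of the proof is missing rather than merely compressed.

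Two further steps are also not sound as written. Taking $S$ to be ``the preimage under $\pi$ of a suitably structured (Freiman-type) subset of $\pi(A)$'' does not produce a group: the theorem requires $S$ to be a genuine solvable subgroup, normal in $\gen{A}$, with $S/U_R$ nilpotent, and an approximate subgroup of the torus (e.g.\ a geometric progression in $\F_{p^m}^*$) need not be close to any subgroup, so largeness of $|A^k\cap S|$ for an actual group $S$ has to be argued, not read off from Freiman structure. Similarly, obtaining normality of $U_R$ in $\gen{A}$ by ``iterating the construction until it stabilises under conjugation by $A$'' is not justified: each iteration costs powers of $A$ and factors of $K$, and you give no reason the process terminates after boundedly many steps with the containment $U_R\subseteq A^k$ intact; in the actual argument normality is secured through the bounded nilpotency class of $U$ together with quantitative trapping lemmas, not by an unbounded stabilisation. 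The opening reduction via Mal'cev/Lie--Kolchin to a bounded-index triangularisable subgroup is fine in spirit, but it is the only part of the sketch that is essentially complete.
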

Let $T_n(\F)$ denote the group of invertible $n\times n$ upper triangular matrices over a field $\F$.
Theorem~\ref{thm:GH-soluble} is proved by reducing to the case where $A\subseteq T_n(\F_p)$.
Breuillard and Green \cite{breuillard2011approximate-b} proved a similar result over the complex numbers. 
\begin{theorem}[Breuillard and Green, Theorem 1.4' \cite{breuillard2011approximate-b}]
\label{thm:brgr}
 Let $K > 1$. Suppose that $A \subseteq T_n(\C)$ is a set with
$|A^3| \leqslant K|A|$.
Then there is some set $A' \subseteq A$ with $|A'| > K^{-C}|A|$ that is contained in
a left coset of a nilpotent subgroup of $T_n
(\C)$ of step at most $n - 1$.
\end{theorem}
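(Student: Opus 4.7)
The plan is to reduce to the abelian setting via the diagonal projection and then invoke Freiman-type theorems. Write $\pi: T_n(\C) \to (\C^*)^n$ for the homomorphism sending each upper-triangular matrix to its diagonal entries; its kernel is the unipotent subgroup $U_n(\C)$, which is nilpotent of step $n-1$. Since $T_n(\C)$ is solvable with derived subgroup inside $U_n(\C)$, the abelianization is exactly $(\C^*)^n$, and this is the natural object on which to apply abelian structural results.

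First I would apply the Pl\"unnecke--Ruzsa inequality to obtain $|A^k| \leqslant K^{O_k(1)}|A|$ for every fixed $k$; in particular, $B := \pi(A)$ is a $K^{O(1)}$-approximate subgroup of $(\C^*)^n$. Next, apply the Green--Ruzsa version of Freiman's theorem to $B$, obtaining a coset progression $P \supseteq B$ of rank $O_n(1)$ with $|P| \leqslant K^{O_n(1)}|B|$. The complex case is especially friendly here because $\C^*$ is torsion-free modulo roots of unity, so the finite-group part of $P$ has bounded order and $P$ is essentially a bona fide multi-dimensional geometric progression.

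Now I would pigeonhole. Either the fibers of $\pi|_A$ are large, in which case some fiber contains $K^{-O(1)}|A|$ elements of $A$ and lies in a single coset of $U_n(\C)$ (nilpotent of step $n-1$); or $B$ itself is large, and after restricting to a sub-coset-progression of bounded complexity one extracts a subset $A' \subseteq A$ of density $K^{-O_n(1)}$ whose diagonal components lie in a single coset of a maximal torus. In this second case, a further refinement using Ruzsa's covering lemma together with the descending central series of $U_n(\C)$ is needed to absorb the remaining unipotent fluctuation into a coset of the torus itself.

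The main obstacle is controlling this second branch of the dichotomy. Non-scalar diagonal elements together with non-central unipotent elements generate a non-nilpotent subgroup of $T_n(\C)$; indeed, the only nilpotent subgroups of $T_n(\C)$ lie either inside the diagonal torus or inside the central-plus-unipotent subgroup $Z(T_n(\C)) \cdot U_n(\C)$. One must therefore ensure that the pigeonholed subset does not mix these two incompatible components. Maintaining the density $|A'|/|A| \geqslant K^{-O(1)}$ while forcing either all unipotent parts of $A'$ to collapse or all diagonal parts of $A'$ to become scalar is the crux, and it is precisely here that working over $\C$---with its benign torsion structure---affords the necessary leverage.
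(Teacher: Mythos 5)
This statement is quoted by the paper from Breuillard and Green without proof, so the relevant comparison is with their actual argument (and with the paper's parallel proof of Theorem~\ref{thm:upp2-Fq}), and against that standard your sketch has a genuine gap. The branch you yourself call ``the crux'' --- when the diagonal projection $\pi(A)$ is large --- is precisely the whole content of the theorem, and neither Green--Ruzsa applied to $(\C^*)^n$ nor ``Ruzsa's covering lemma together with the descending central series of $U_n(\C)$'' addresses it. The obstruction does not live in the abelianization at all: even if $\pi(A)$ were a perfect geometric progression with all fibers of size $1$, $A$ need not lie close to a coset of a nilpotent subgroup unless the unipotent coordinates are coherently aligned with the diagonal part (e.g.\ $A$ sits inside a conjugate of the diagonal torus). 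Forcing that alignment requires a growth ingredient for the conjugation/dilation action of the ratio set on the unipotent coordinates --- in this paper's $\F_q$ analogue this is exactly the sum--product input (Proposition~\ref{prop:Fq-sum-product}) showing $|X+DX|$ grows unless $D$ is small or $X$ essentially fills a subspace over $\gen D$; over $\C$ Breuillard and Green need the corresponding fact that a non-growing action of a large set of scalars is impossible (no finite subfields), plus an induction on $n$ through the derived/central series. Your proposal never supplies this step, so the second branch of the dichotomy is asserted rather than proved.

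Two subsidiary claims are also false as stated. First, nilpotent subgroups of $T_n(\C)$ are not confined to the diagonal torus and $Z(T_n(\C))\cdot U_n(\C)$: for instance in $T_3(\C)$ the matrices $\mathrm{diag}(a,a,b)$ together with the unipotent elements $I+tE_{12}$ commute and generate an abelian (hence nilpotent) subgroup lying in neither, and the theorem's conclusion genuinely needs such mixed torus-times-unipotent groups. Second, torsion in $\C^*$ is not benign in the way you use it: a $1$-approximate subgroup of $\C^*$ can be the full group $\mu_N$ of $N$-th roots of unity for arbitrarily large $N$, so the finite-group component of the Green--Ruzsa coset progression is not of bounded order. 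The first error undermines the target structure you are trying to pigeonhole into; the second undermines the claim that $P$ is ``essentially a bona fide multi-dimensional geometric progression.''
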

 
Gill and Helfgott conjectured that their result should hold for any finite field $\F_q$, where $q=p^r$ is any prime power.
By embedding $GL_n(\F_{p^r})$ into $GL_{rn}(\F_p)$, we could apply Theorem~\ref{thm:GH-soluble} to subsets of $GL_n(\F_{p^r})$, but then the constants would tend rapidly to infinite with $r$.
In this paper, we will classify the approximate subgroups of the group $T_2(\F_q)$ of $2\times 2$ upper triangular matrices over an arbitrary finite field $\F_q$. This shows that Theorem~\ref{thm:GH-soluble} holds when $\F_p$ is replaced by $\F_q$ in the simplest case of dimension $n=2$.

Let $U_n(\F_q)$ denote the unipotent subgroup of $T_n(\F_q)$, comprising matrices with 1's on the diagonal.
We say that a matrix $g$ in $T_n(\F_q)$ is \emph{$\F^*$-potent}\/ if $g=\lambda u$ where $\lambda\in\F^*$ and $u\in U_n(\F_q)$.
The collection of $\F^*$-potent matrices is a subgroup of $T_n(\F_q)$, and for $n=2$ it is abelian.
Our first theorem is the following.
\begin{thm}
  \label{thm:2D}
  \label{thm:upp2-Fq}
If $A\subseteq T_2(\F_q)$ satisfies $e\in A, A=A^{-1}$, and $|A^3|\leqslant K|A|$, then either
\begin{enumerate}
\item there is an $\F^*$-potent group $H$ such that $|A^2\cap H|\gg K^{O(1)}|A|$, or
\item there is a subgroup $U\leqslant U_2(\F_q)$ such that $U\subseteq A^{O(1)}$ and $\gen A/U$ is abelian.
\end{enumerate}
\end{thm}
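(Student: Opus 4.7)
The natural structural tool here is the ratio homomorphism $\phi \colon T_2(\F_q) \to \F_q^*$ defined by $\phi\!\begin{pmatrix} a & b \\ 0 & d \end{pmatrix} = a/d$, whose kernel is exactly the $\F^*$-potent subgroup $H^* := Z(T_2)\cdot U_2$. I would analyse $A$ through the image $\phi(A) \subseteq \F_q^*$ and the fibres of $\phi|_A$, splitting into cases according to whether one fibre captures a large fraction of $A$; this dichotomy also decides which of the two alternatives holds.

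In the \emph{large-fibre} case, suppose some fibre of $\phi|_A$ has size $\gg K^{-O(1)}|A|$. That fibre lies in a single coset $gH^*$ of the $\F^*$-potent subgroup; since $H^*$ is normal in $T_2(\F_q)$ and $A=A^{-1}$, writing $S := A \cap gH^*$ we get $S \cdot S^{-1} \subseteq A^2 \cap H^*$, and so $|A^2 \cap H^*| \geq |S| \gg K^{-O(1)}|A|$, which is alternative~(1).

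In the \emph{small-fibre} case every fibre of $\phi|_A$ is small, which forces $\phi(A)$ itself to be a genuine approximate subgroup of $\F_q^*$ of substantial size (by Plünnecke--Ruzsa applied to the image, together with Lemma~\ref{lem:PR-ineq}). Here I would construct the subgroup $U \leq U_2$ of alternative~(2) inside $A^{O(1)}$ by exploiting that $U_2(\F_q) \cong (\F_q,+)$ and that $A$ acts on $U_2$ by conjugation as multiplication by ratios from $\phi(A)$. A commutator $[g_1,g_2] \in A^4$ with $\phi(g_1) \neq \phi(g_2)$ is, generically, a nonzero unipotent element (the top-right entry computes to $(b'(a-d)-b(a'-d'))/(dd')$); conjugating such a unipotent by further elements of $A$ scales its top-right entry by elements of $\phi(A)\cdot\phi(A)^{-1}$, while multiplying two unipotents adds their entries. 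Iterating a bounded number of times, the set of unipotent entries visible in $A^{O(1)}$ becomes closed (up to approximate-group constants) under both the relevant multiplicative and additive operations, so it contains an $\F_p$-subspace $V \subseteq \F_q$, i.e.\ a subgroup $U \leq U_2$. Because $[T_2,T_2] \leq U_2$ and the same commutator formula shows the entries of $[A,A]$ to be $\F_p$-linear combinations of the entries already used to build $U$, we obtain $[A,A] \subseteq U$, so $\langle A\rangle/U$ embeds in the abelian group $T_2/U_2$ and is therefore abelian.

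\textbf{Main obstacle.} The hard work is in the small-fibre case: producing the whole subgroup $U$ inside $A^{O(1)}$ using only a bounded number of matrix products (an absolute power of $K$, independent of $q$). One subtlety is that $\phi(A)$ may live inside the multiplicative group of a proper subfield of $\F_q$, so $U$ must be allowed to be strictly smaller than $U_2$; the final subspace $V$ we construct should really be the smallest $\F_p$-subspace of $\F_q$ stable under a multiplicative approximate version of $\phi(A)$. A second subtlety is that the additive closure of the unipotent entries needs a genuine mixing between the multiplicative structure of $\phi(A)$ and the additive structure of $\F_q$---a sum-product-flavoured phenomenon---so the whole argument rests on the Plünnecke--Ruzsa machinery together with the approximate-group identities for $\phi(A)$ to control how many matrix products suffice before the enumerated entries saturate into a true subspace.
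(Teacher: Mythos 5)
Your large-fibre branch is fine and coincides with how the paper obtains alternative (1) (pigeonhole on the fibres of the ratio map $\chi$, then $S^{-1}S\subseteq A^2\cap H$). The genuine gap is in the small-fibre branch, and it is exactly the step you defer to your ``main obstacle'': producing an \emph{exact} subgroup $U\leqslant U_2(\F_q)$ with $U\subseteq A^{O(1)}$. Your plan is to iterate conjugation (dilation by $D=\chi(A)$) and multiplication (addition of top-right entries) ``a bounded number of times'' until the set of visible unipotent entries ``saturates into a true subspace.'' That does not work as stated: approximate closure of a set $X\subseteq\F_q$ under $x\mapsto x+dx'$ does not self-improve after $O(1)$ steps to exact containment of a subspace inside $A^{O(1)}$, and turning it into such a statement is precisely the nontrivial sum-product input the paper imports. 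Concretely, the paper sets $X=u^{-1}(A^4\cap U_2(\F_q))$, uses Lemma~\ref{lem:growth-in-subgroups} to show $|X+DX|\ll K^{30}|X|$, and then invokes Proposition~\ref{prop:Fq-sum-product} (Theorem D of \cite{murphy2017products}): either $|D|\ll K^{O(1)}$ --- which sends you back to alternative (1), so your case split must allow for this --- or $X$ occupies a positive proportion of $W=\mathrm{Span}_F(X)$ ($F$ the subfield generated by $D$) \emph{and} $W\subseteq 4DX-4DX\subseteq u^{-1}(A^{O(1)}\cap U_2)$, the exact containment coming from a Cauchy--Davenport-type filling argument (a subset of density $>1/2$ in a subgroup sums to the whole subgroup). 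Without this dichotomy-plus-filling ingredient your construction of $U$ has no proof.

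Two further points on the endgame. First, you want $V$ to be an $\F_p$-subspace, but for $U=u(V)$ to be normal in $\gen{A}$ (which you need even to form $\gen{A}/U$) the subspace must be stable under multiplication by the group generated by $\chi(A)$; the paper gets this for free because $W$ is a vector space over the subfield $F$ generated by $D$. Second, the deduction ``$[A,A]\subseteq U$, so $\gen{A}/U$ embeds in the abelian group $T_2/U_2$'' is incorrect: the natural map $\gen{A}/U\to T_2(\F_q)/U_2(\F_q)$ has kernel $(\gen{A}\cap U_2)/U$, which need not be trivial. The correct route, as in the paper, is to use $A^2\cap U_2\subseteq U$ together with Lemma~\ref{lem:covering-by-kernel} to write $A\subseteq\Lambda\cdot U$ for coset representatives $\Lambda\subseteq A$, note $U$ is normalised by $\Lambda$, and conclude that the images of $\Lambda$ commute in $\gen{A}/U$ and generate it, so the quotient is abelian.
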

Thus if a subset $A$ of $T_n(\F_q)$ does not grow, then either $A$ is nearly contained in a coset of an abelian subgroup, or the upper right hand entries of the elements of $A$ essentially form a vector space over the subfield generated by ratios of the diagonal of elements of $A$.
Expanding upon this second obstruction, examples 1-approximate subgroups of $T_2(\F_q)$ include the affine subgroup and the upper triangular matrices over a subfield of $\F_q$.
These obstructions are not encountered in the papers of Breuillard and Green~\cite{breuillard2011approximate-b} and Gill and Helfgott~\cite{Gilhelf10} (as $\F_p$ and $\C$ do not have non-trivial finite subfields), and this is the novelties of our proof appear.

Theorem~\ref{thm:upp2-Fq} can be seen as a step towards extending Theorem~\ref{thm:GH-soluble} (and Theorem~\ref{thm:brgr})
to any finite field.
In fact, we can write our conclusions in the same style as Gill and Helfgott's Theorem~\ref{thm:GH-soluble}: in case 1, $U_R=\{I\}$ is trivial and $S$ is the subgroup generated by $A^2\cap H$, and in case 2, we take $U_R=U$ and $S = \gen{A}$.
If we were to consider Theorem~\ref{thm:2D} over the complex numbers rather than $\F_q$, the unipotent subgroup $U$ in case 2 would be trivial, and Theorem~\ref{thm:2D} would have the same conclusion as the $2\times 2$ case of Breuillard and Green's Theorem~\ref{thm:brgr}.



Now we change focus and describe our multiplicative energy results for the upper triangular $2\times2$ matrices and the Heisenberg group over $\F_q$.
Multiplicative energy is a $L^2$ measure of multiplicative structure; specifically, the multiplicative energy (or simply ``energy'') of a finite subset $A$ of a group is given by
$$
E(A) := | \{(g,h,u,v)\in A^4:\, g^{-1} h= u^{-1} v\}|\,=\sum_xr^2_{A^{-1}A}(x)$$
where $r_{A^{-1}A}(x)$ is a representation function  that counts the number of ways $x$  can be expressed as a product $a^{-1}a'\in A^{-1}A$. A standard application of the Cauchy-Schwarz inequality yields the following bounds connecting $E(A)$ with $A^{-1}A$.
\begin{equation} 
E(A) \geqslant \frac{|A|^4}{|A^{-1}A|}
\end{equation}
Thus if the product set $|A^{-1}A|$ is small, the energy of $A$ is large, and vice versa.
In particular, proving an upper bound for the multiplicative energy of a set $A$ is stronger than proving that $A$ grows under multiplication.

Results such as Theorem~\ref{thm:GH-soluble} are non-trivial when $K=|A|^{\delta}$ for some small constant $\delta>0$ depending on the group containing $A$.
The aim of our next set of results is to seek explicit quantitative results, meaning that we would like to find an explicit value $\delta$ for which $K=|A|^\delta$ is non-trivial.
Recent progress made in this direction includes a $\delta=1/20$ growth rate for $SL_2(\F_p)$ due to Rudnev and Skredov~\cite{rudnev2018growth} building upon upon Helfgott's reuslt~\cite{helfgott2005growth}, as well as  Shkredov's work on the Heisenberg group~\cite{shkredov2019remarks} and Dona's work~\cite{dona2019number} on the affine group over $\F_q$.

In the following we write $\Lambda$ for the diagonal subgroup with equal entries on the main diagonal and $T$ for a maximal torus in the affine group.
Typical elements of $\Lambda U_2$ and $\Lambda T$ look like
$$ 
\bpm a & b\\0& a \epm \in \Lambda U_2\,\qquad  \bpm a &(c-a)x\\0& c \epm \in \Lambda T\,.
$$

\begin{theorem}\label{thm:energyt2}
Let $A\subseteq T_2(\F_q)$ and $M_1$ be the maximum number of elements of $A$ in a coset of $\Lambda T$, $M_2$ be the maximum number of elements of $A$ in a coset of $\Lambda U_2$, $M_3$ in a coset of $U_2$, suppose 
$|A|M_3\leqslant p^2$.

Then we have the following energy estimate
\[
E(A) \lesssim |A|^{5/2} M_2^{1/2} + |A|^2 M_1,
\]
and hence\footnote{$X \gg Y$ and $Y \ll X$ indicates that there is an absolute constant $C>0$ such that $X \geqslant CY$, and the notations $\lesssim, \gtrsim$ hide, on top of this, powers of $\log |A|$.}
$$
|A^{-1}A|,\,|AA| \gtrsim \frac{ |A|^{2} } { M_1+ \sqrt{|A| M_2}}.
$$
\end{theorem}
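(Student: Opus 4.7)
The plan is to decompose $E(A)$ by the type of the matrix $M := g^{-1}h = u^{-1}v$. Writing elements of $T_2(\F_q)$ as triples $(a,b,c)$ and computing, $g^{-1}h = u^{-1}v$ amounts to three scalar conditions: $a_h/a_g = a_v/a_u =: \alpha$, $c_h/c_g = c_v/c_u =: \gamma$, and a linear relation on the $b$-coordinates forcing a common off-diagonal entry $s$. Thus $M = \bpm \alpha & s \\ 0 & \gamma \epm$, and I split $E(A) = E_U + E_T$ according to whether $M \in \Lambda U_2$ (i.e.\ $\alpha=\gamma$) or $M \in \Lambda T_y \setminus \Lambda$ for the unique $y = s/(\gamma-\alpha)$.

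\paragraph{The unipotent part (elementary).}
Pairs $(g,h)$ with $M \in \Lambda U_2$ are exactly pairs lying in a common coset of the abelian normal subgroup $\Lambda U_2$ (cosets indexed by $a/c \in \F_q^*$). Partitioning $A = \bigsqcup_C (A\cap C)$, the unipotent energy is the bilinear sum
\[
E_U = \sum_{C,C'} E(A\cap C,\, A\cap C')
\]
in the abelian group $\Lambda U_2$. Cauchy--Schwarz gives $E(X,Y) \le \sqrt{E(X)E(Y)} \le (|X||Y|)^{3/2}$ (via the trivial $E(X) \le |X|^3$), and H\"older together with $|A\cap C| \le M_2$ yields $\sum_C |A\cap C|^{3/2} \le \sqrt{M_2}\,|A|$, hence
\[
E_U \lesssim M_2 |A|^2 \le M_2^{1/2} |A|^{5/2},
\]
the last step using $M_2 \le |A|$. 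No incidence geometry is needed here.

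\paragraph{The torus part (incidence).}
Each pair $(g,h)$ with $\alpha \ne \gamma$ determines a unique $y=y(g,h)$, and $g,h$ then lie in a common left $\Lambda T_y$-coset, characterized by $\pi_y(g) := (b_g + y a_g)/c_g$ being constant. Associating to $g \in A$ the line $\ell_g \subset \F_q^2$ given by $t = (a_g/c_g)\,y + b_g/c_g$, the point $(y(g,h),\pi_y(g))$ is precisely the intersection $\ell_g \cap \ell_h$. Crucially, any point $(y_0, t_0) \in \F_q^2$ lies on at most $M_1$ of these lines, since the set of $g \in A$ with $\pi_{y_0}(g) = t_0$ is the intersection of $A$ with a single $\Lambda T_{y_0}$-coset. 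Applying Szemer\'edi--Trotter in $\F_p^2$ to the line family $\{\ell_g : g \in A\}$ --- the hypothesis $|A|M_3 \le p^2$ keeps us in the valid range --- and then bookkeeping the matching conditions $\alpha_{gh}=\alpha_{uv}$, $\gamma_{gh}=\gamma_{uv}$, should give
\[
E_T \lesssim |A|^2 M_1.
\]

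\paragraph{Conclusion and main obstacle.}
Combining, $E(A) \lesssim |A|^{5/2} M_2^{1/2} + |A|^2 M_1$, and the product set bound follows from $|A^{-1}A|,\,|AA| \ge |A|^4/E(A)$. The main obstacle is the torus estimate. A naive decomposition over $y \in \F_q$ with Cauchy--Schwarz inside each $\Lambda T_y$ loses an unwanted factor of $q$; running the incidence argument globally is the delicate step. One must additionally ensure the conditions $\alpha_{gh}=\alpha_{uv}$ and $\gamma_{gh}=\gamma_{uv}$ (which are not encoded in the intersection point $(y,t)$ alone) are correctly reflected in the incidence count --- this is where the assumption $|A|M_3 \le p^2$ plays its role, and getting the bookkeeping right is the crux.
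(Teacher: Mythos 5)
Your unipotent part is correct and is a nice elementary observation: splitting over cosets of the normal subgroup $\Lambda U_2$ and using $E(X,Y)\le\sqrt{E(X)E(Y)}\le(|X||Y|)^{3/2}$ does give $E_U\le M_2|A|^2\le M_2^{1/2}|A|^{5/2}$ with no incidence geometry. The genuine gap is the torus part, i.e.\ the $|A|^2M_1$ term, which is the heart of the theorem — and you acknowledge it is only claimed ("should give"). Concretely: a quadruple with $g^{-1}h=u^{-1}v\notin\Lambda U_2$ is equivalent to \emph{three} scalar conditions, $\alpha_{gh}=\alpha_{uv}$, $\gamma_{gh}=\gamma_{uv}$, and $y(g,h)=y(u,v)$. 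Your incidence formulation records only the intersection point $(y,t)$ of the lines $\ell_g,\ell_h$, where $t$ is the common coset value; the quadruple condition constrains the $y$-coordinates to agree but not the $t$-coordinates, and the $\alpha$- and $\gamma$-matching conditions are not visible at all to a count of line--line intersections or point--line incidences in the plane. So no application of a Szemer\'edi--Trotter bound to the family $\{\ell_g\}$ by itself can produce $E_T\lesssim|A|^2M_1$; counting quadruples of lines whose intersection points merely share an abscissa is far too lossy. Two further problems: the setting is a general $\F_q$, where no Szemer\'edi--Trotter theorem of the needed strength is available (the strong point--line bounds are for prime fields, and the condition $|A|M_3\le p^2$ does not match the hypotheses of any such statement you could invoke); and your lines come with multiplicities (all elements of $A$ in a single $\Lambda$-coset give the same $\ell_g$), which the proposal never addresses.

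For comparison, the paper's proof resolves exactly this bookkeeping in a different way: it first fixes the products $C_1=g_1v_1=h_1u_1$ and $C_3=g_3v_3=h_3u_3$ (these encode your two matching conditions), so that within each slice $Q_{C_1,C_3}$ the remaining single equation is a bilinear relation expressible as point--plane incidences in three dimensions; Rudnev's point--plane theorem, which is valid over $\F_q$ subject to $|P|\ll p^2$ (this is the true source of the hypothesis $|A|M_3\le p^2$), is then applied with weights $\sim m$ after a dyadic decomposition of $A$ according to the number of $\Lambda$-dilates, collinear points corresponding to cosets of $\Lambda T$ or $\Lambda U_2$, and the estimates are summed over $(C_1,C_3)$. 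The extra degrees of freedom in the 3-dimensional planes are precisely what absorb the $\alpha,\gamma$ matching that your planar picture discards. As written, the claim $E_T\lesssim|A|^2M_1$ is an unsupported assertion, so the proposal does not constitute a proof; to repair it you would need to replace the planar Szemer\'edi--Trotter step by a slicing-plus-higher-dimensional-incidence mechanism of this kind (or some genuinely new argument), and also handle the line multiplicities, e.g.\ by the paper's dyadic decomposition.
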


We prove a similar result for the Heisenberg Group, which is the group of matrices of the form 
\[
H=H(\F_q):=\left\{\bpm 1& g_1 & g_3\\0&1&g_2\\0&0&1\epm:g_1,g_2,g_3\in\F_q\right\}.
\]
We write $LZ$ for the two-dimensional abelian subgroups of the Heisenberg group where $L$ is the subgroups defined by $g_3=0$ and $\alpha g_1+\beta g_2=0$ for some $\alpha,\beta$. and $Z$ is subgroup formed of elements of the form $(0,0,g_3)$, the centre of $H$. 

\begin{theorem}\label{thm:energyh}
Let $A\subset H(\F_q)$, let $m$ be the maximum number of elements of $A$ in a coset of $Z$, and let $M$ be the maximum number of elements in a coset of $LZ$.
If $|A|m\leqslant p^2$ and $m\leqslant \sqrt{|A|}$, then we have the energy estimate 
\[
E(A) \lesssim |A|^{5/2} m + |A|^2 M,
\]
and hence,
$$
|A^{-1}A|,\,|AA| \geqslant \frac{ |A|^{2} } { M+ m\sqrt{|A|} }\,.
$$
\end{theorem}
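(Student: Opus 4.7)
The plan is to project onto the abelianisation $H/Z\cong \F_q^2$ via $\pi(g)=(g_1,g_2)$ and decompose $E(A)$ according to the image difference $d=\pi(h)-\pi(g)$, paralleling the strategy of Theorem~\ref{thm:energyt2}. Direct computation of the Heisenberg product shows that $g^{-1}h=u^{-1}v$ is equivalent to
\[
\pi(h)-\pi(g)=\pi(v)-\pi(u)=:d\andd h_3-g_3-g_1 d_2=v_3-u_3-u_1 d_2=:t.
\]
Setting $r_{d,t}=|\{(g,h)\in A^2:\pi(h)-\pi(g)=d,\,h_3-g_3-g_1d_2=t\}|$, we have $E(A)=\sum_{d,t}r_{d,t}^2$, and the argument naturally splits according to whether $d_2=0$.

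For $d_2=0$, every pair $(g,h)$ counted by $r_{(d_1,0),t}$ satisfies $h-g=(d_1,0,t)$, so $g$ and $h$ lie in a common coset of the $2$-dimensional abelian subgroup $L_0Z=\{(x,0,z):x,z\in\F_q\}$, which is one of the $LZ$ subgroups of the statement. Using $\max_{d_1,t} r_{(d_1,0),t}\leqslant |A|$ (attained on the diagonal) together with $\sum_{d_1,t} r_{(d_1,0),t}=\sum_C |A\cap C|^2\leqslant M|A|$ (where $C$ ranges over cosets of $L_0Z$), I would deduce $\sum_{d_1,t}r_{(d_1,0),t}^2\leqslant M|A|^2$, producing the $|A|^2M$ term of the theorem. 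The subcase $d=0$ contributes $m^2|A|\leqslant |A|^2\leqslant M|A|^2$ via $m\leqslant \sqrt{|A|}$.

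For $d_2\neq 0$, the heart of the proof is to recast $\sum_{d_1,t}r_{(d_1,d_2),t}^2$ as a point-line incidence count in $\F_q^2$. The relation $g_3=h_3-t-g_1 d_2$ is linear in $(g_1,g_3)$: each pair $(h,t)\in A\times\F_q$ defines a line of slope $-d_2$ in the $(g_1,g_3)$-plane, and each $g\in A$ contributes a point of weighted multiplicity at most $m$, coming from the fiber $\pi^{-1}(\pi(g))\cap A$. The hypothesis $|A|m\leqslant p^2$ ensures that the underlying point set lies in a region of size at most $p^2$, placing us in the valid regime of the Stevens--de Zeeuw point-line incidence bound over $\F_q$; applying it for each $d_2$ and summing over $d_2\in \F_q^*$ yields the $|A|^{5/2}m$ contribution. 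Adding this to the degenerate contribution gives $E(A)\lesssim |A|^{5/2}m+|A|^2M$.

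The main obstacle is the incidence step: one must arrange the points and lines so that incidences correspond exactly to the quadruples contributing to $r_{d,t}^2$, verify that the $\F_q$ incidence bound applies uniformly across all $d_2\in\F_q^*$ (which is where $|A|m\leqslant p^2$ is critically used, as the point set can be as large as $|A|m$ after accounting for fibres), and assemble the summation over $d_2$ so the $m$-factor appears as written without extra losses beyond the logarithmic ones absorbed by $\lesssim$. Once the energy bound is established, the lower bounds on $|A^{-1}A|$ and $|AA|$ follow from the standard Cauchy--Schwarz inequality $E(A)\geqslant |A|^4/|A^{-1}A|$.
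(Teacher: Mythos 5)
Your reformulation $E(A)=\sum_{d,t}r_{d,t}^2$ and your treatment of the $d_2=0$ part are fine (that part is exactly an energy along cosets of $L_0Z$, bounded by $M|A|^2$). The gap is the $d_2\neq 0$ case, and as set up it cannot be repaired. First, once $d_2$ is frozen the condition $g^{-1}h=u^{-1}v$ is entirely \emph{linear} in the coordinates of $g,h,u,v$ (the only bilinear term, $g_1d_2$, has $d_2$ fixed), so $\sum_{d_1,t}r_{(d_1,d_2),t}^2$ is a translation-invariant, purely additive quantity; correspondingly all your lines $g_3=h_3-t-g_1d_2$ have the same slope $-d_2$, i.e.\ are parallel, and an incidence count between points indexed by single elements $g$ and lines indexed by $(h,t)$ reproduces only the first moment $\sum_t r_{d,t}$, not the energy. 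No point--line theorem (Stevens--de Zeeuw or otherwise) extracts a power saving from such a degenerate, translation-invariant configuration. Second, the weighting is wrong: the elements of $\pi^{-1}(\pi(g))\cap A$ (your notation) differ in $g_3$, so they give \emph{distinct} points of the $(g_1,g_3)$-plane rather than one point of multiplicity at most $m$; the true multiplicity of a planar point is a $g_2$-fibre, which neither $m$ nor $M$ controls. Third, and decisively, the claim that the $d_2\neq 0$ part is $\lesssim |A|^{5/2}m$ is false: take $A=\{(c,y,0):y\in Y\}$ with $Y$ an arithmetic progression. Then $m=1$ and $|A|m\leqslant p^2$ can be arranged, yet $g^{-1}h=u^{-1}v$ reduces to $h_2-g_2=v_2-u_2$, so the $d_2\neq0$ quadruples number about $E^{+}(Y)\sim |A|^3\gg |A|^{5/2}m$. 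That contribution is controlled only by the $|A|^2M$ term (here $M=|A|$, since $A$ sits in a single coset of $LZ$ with $L$ the line $g_1=0$), so a correct argument must produce an $M$-type term inside the $d_2\neq0$ analysis as well, and your decomposition assigns the whole $M$-term to $d_2=0$.

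The paper's proof avoids these problems by fixing \emph{both} values $C_1=g_1+v_1=h_1+u_1$ and $C_2=g_2+v_2=h_2+u_2$ and treating the remaining equation as a genuinely three-dimensional weighted point--plane problem: points are indexed by pairs $(g,v)\in\cP_C$, planes by pairs $(h,u)$, and Rudnev's bound (Theorem~\ref{thm:pointplane}) is applied for each $C$. The bilinearity survives because $u_1,u_2$ still vary within a fixed $C$; the $Z$-fibres enter as weights $\sim m$ through the realizations of the sum $g_3+v_3$ (this is where $|A|m\leqslant p^2$ is used); the collinearity term $km$ of the point--plane bound is precisely what yields $M$, since collinear points project to a line in the $(g_1,g_2)$-plane, i.e.\ a coset of $LZ$ for an arbitrary line $L$, not just $g_2=0$; and the summation over $C$ is assembled using $\sum_C|\cP_C|=|A|^2$ together with $\sup_C|\cP_C|\leqslant |A|m$, which is what produces $|A|^{5/2}m+|A|^2M$. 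Your decomposition over $d_2$ alone has no analogous supremum control, so even a correct per-$d_2$ bound would not sum to the stated estimate. Finally, a small point: the bound for $|AA|$ does not follow from $E(A)\geqslant |A|^4/|A^{-1}A|$ alone; one also needs Shkredov's inequality $E^*(A)\leqslant E(A)$, as noted in Section~\ref{secsub:energyT2}.
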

This generalises a theorem of Shkredov~\cite[Theorem 2]{shkredov2019remarks}. 

The Heisenberg group interests us for a few reasons.
First it is a step towards generalising Theorem~\ref{thm:upp2-Fq} to $T_3(\F_q)$, but is still three dimensional in the same was $T_2(\F_q)$ is and thus the energy approach is very similar.
It is also interesting as it contains an example of a subset of upper triangular matrices that has a set which does not grow and is not mostly contained in a coset of an abelian group.
In particular consider the set \begin{equation}
A:=\left\{\bpm 1& g_1 & g_3\\0&1&g_2\\0&0&1\epm:g_1,g_2\in [1,\dots ,n] g_3\in [1,\dots n^2]\right\}.\label{example1}\end{equation}
The set $A$ neither grows nor is it a coset of an abelian subgroup, illustrating why Theorem~\ref{thm:brgr}  concludes that sets in $T_n(\C)$ with small tripling must have large overlap with a coset of a nilpotent group of step $n-1$ (with n being from $n\times n$ matrices).

In the proofs of both of our energy results we will follow ideas of Petridis,  Roche-Newton,  Rudnev and  Warren \cite{petridis2019energy}, who showed the following result for the affine group, $$\aff(\F):=\left\{
  \begin{pmatrix}
    a & b \\
    0 & 1 \\
  \end{pmatrix}
\colon a,b\in\F, a\neq 0
\right\}.$$
\begin{theorem}[Corollary 6 \cite{petridis2019energy}]
Let $A$ be a subset of the affine matrices over $\F_p$ have no more than $M$ elements in a coset of a torus and no more than $m$ elements in a coset of the unipotent subgroup.
Suppose $m|A| \leqslant p^2$.
Then
\[
\min\{|AA|, |A^{-1}A|\} \gg m^{-1/2}|A|^{3/2} + M^{-1}|A|^2.
\]
In particular, if $|AA| = K|A|$ and $m|A|\leq p^2$, then $A$ has $\gg |A|/K^2$ elements in a coset of the unipotent subgroup or $\gg |A|/K$ elements in a coset of a torus.
\end{theorem}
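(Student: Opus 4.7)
The plan is to reduce the lower bound on $\min(|AA|,|A^{-1}A|)$ to an upper bound on the multiplicative energy $E(A)$, then prove the energy bound via Rudnev's point-plane incidence estimate in $\F_p^3$. Cauchy--Schwarz gives $|A^{-1}A|\cdot E(A)\geqslant |A|^4$ (and similarly for $|AA|$), so it suffices to show
\[
E(A)\lesssim m^{1/2}|A|^{5/2}+M|A|^2,
\]
which yields $\min(|AA|,|A^{-1}A|)\gtrsim \frac{|A|^2}{M+(m|A|)^{1/2}}\gg m^{-1/2}|A|^{3/2}+M^{-1}|A|^2$.

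\textbf{Coordinates and setup.} Identify $g\in\aff(\F_p)$ with $(a_g,b_g)\in\F_p^*\times\F_p$. The equation $g^{-1}h=u^{-1}v$ becomes $a_g a_v=a_u a_h$ together with $a_u(b_h-b_g)=a_g(b_v-b_u)$. Let $t=a_h/a_g=a_v/a_u$ be the common multiplicative ratio, and set $R_t=\{((a,b),(ta,b'))\in A^2\}$, embedded in $\F_p^3$ via $((a,b),(ta,b'))\mapsto(a,b,b')$. Writing $n_a=|\{b:(a,b)\in A\}|\leqslant m$, Cauchy--Schwarz gives the uniform bound $|R_t|=\sum_a n_a n_{ta}\leqslant m|A|$, while $\sum_t|R_t|=|A|^2$.

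\textbf{Applying Rudnev.} For fixed $t$, the inner energy $E_t(A):=\sum_s r(t,s)^2$ counts pairs of $R_t$-points sharing the common slope $s=(b'-b)/a$; this can be written as an incidence count between the $|R_t|$ points of $R_t$ and the $|R_t|$ planes $\{(a,b,b'):c(b'-b)=a(d'-d)\}$ indexed by $(c,d,d')\in R_t$. The hypothesis $m|A|\leqslant p^2$ ensures $|R_t|\leqslant p^2$, so Rudnev's theorem yields
\[
E_t(A)\ll |R_t|^{3/2}+k_t|R_t|,
\]
where $k_t$ bounds the number of collinear $R_t$-points in $\F_p^3$. A case analysis of the projection of such a line to $\F_p^2$ shows that collinear $R_t$-points correspond either to $A$-elements in a single unipotent coset (a vertical line, bounded by $m$) or to $A$-elements on a non-vertical affine line (a torus coset, bounded by $M$), so $k_t\leqslant \max(m,M)$. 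Summing, $\sum_t|R_t|^{3/2}\leqslant (\max_t|R_t|)^{1/2}\sum_t|R_t|\leqslant (m|A|)^{1/2}|A|^2$ and $\sum_t k_t|R_t|\leqslant \max(m,M)|A|^2$; the term $m|A|^2$ is absorbed into $m^{1/2}|A|^{5/2}$ via $m\leqslant|A|$, giving the required energy bound.

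\textbf{Main obstacle.} The principal challenge is aligning the incidence geometry with the affine-group structure: one must check that rich (collinear) configurations in the $\F_p^3$-embedding of $R_t$ correspond exactly to cosets of the unipotent subgroup or a conjugate torus of $\aff(\F_p)$, so that the parameter $k_t$ in Rudnev's theorem is controlled by $m$ and $M$. The condition $m|A|\leqslant p^2$ is precisely what places $R_t$ inside the non-trivial range of Rudnev's theorem. A parallel argument with $hg^{-1}=vu^{-1}$ in place of $g^{-1}h=u^{-1}v$ handles $|AA|$.
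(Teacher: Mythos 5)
You should first note that the paper does not actually prove this statement: it is quoted from Petridis--Roche-Newton--Rudnev--Warren \cite{petridis2019energy}. However, the paper's own proofs of Theorems~\ref{thm:energyt2} and~\ref{thm:energyh} implement exactly the intended argument, so that is the right scheme to compare against. Your overall strategy (Cauchy--Schwarz reduction to an energy bound, Rudnev's point--plane theorem, collinear points controlled by $m$ and $M$, summation over a diagonal parameter) is correct, but your implementation has a fatal flaw in the choice of decomposition. You fix the ratio $t=a_h/a_g=a_v/a_u$, thereby pairing $(g,h)$ and $(u,v)$. With this pairing the residual equation $a_u(b_h-b_g)=a_g(b_v-b_u)$ sees the point-pair only through its slope $(b'-b)/a$ and the plane-pair only through its slope $(d'-d)/c$: the plane $\{(a,b,b'):c(b'-b)=a(d'-d)\}$ has projective coefficient vector $\left(\frac{d'-d}{c}:1:-1:0\right)$, so all of your planes contain the common line $\{a=0,\ b=b'\}$, and two indices $(c,d,d')$ with the same slope give the \emph{identical} plane. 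Your family of ``$|R_t|$ planes'' is therefore a multiset whose multiplicities are precisely the richnesses $r_t(s)$ you are trying to square-sum. Theorem~\ref{thm:pointplane} is a statement about sets of distinct planes: applied to the distinct planes here it bounds only $\sum_s r_t(s)=|R_t|$, which is trivial, and reinstating multiplicities costs a factor $\max_s r_t(s)$, which is not controlled by $m$, $M$, or $|R_t|^{1/2}$ (for $A$ a union of $n$ full cosets of a single torus, $\max_s r_t(s)=|A|$, which is much larger than $m+M+|R_t|^{1/2}$ once $1\ll n\ll \sqrt p$). So the inequality $E_t\ll|R_t|^{3/2}+k_t|R_t|$ is not a legitimate application of Rudnev's theorem, and the degeneracy bites exactly in the interesting regime where $E_t$ is large.

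The correct decomposition --- the one used by the paper and by \cite{petridis2019energy} --- is the opposite pairing: fix the common product $C=a_ga_v=a_ha_u$ (equivalent to the ratio condition) and index points by the pairs $(g,v)$ and planes by the pairs $(h,u)$. Writing the residual equation as $a_ub_h-a_ub_g-a_gb_v+a_gb_u=0$, the points $(1:b_g:a_gb_v:a_g)$ and planes $(a_ub_h:-a_u:-1:b_u)$ are genuinely three-parameter families, injectively indexed by the pairs (this is the affine specialisation of the point/co-vector formulas in the proof of Theorem~\ref{thm:energyt2}; the weights appearing there come from the centre $\Lambda$ of $T_2(\F_q)$ and are absent in the affine group). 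With that setup, your collinearity analysis (projection to the $(a,b)$-plane gives vertical lines, i.e.\ unipotent cosets, or non-vertical lines, i.e.\ torus cosets) and your summation over the diagonal parameter go through essentially verbatim. Two smaller points. First, bounding solutions of $hg^{-1}=vu^{-1}$ controls $|AA^{-1}|$, not $|AA|$; for $|AA|$ the paper invokes Shkredov's inequality $E^*(A)\leqslant E(A)$ (equivalently, use $E^*(A)=\sum_x r_{A^{-1}A}(x)\,r_{AA^{-1}}(x)$, Cauchy--Schwarz, and the identity $\sum_x r_{AA^{-1}}(x)^2=E(A)$). Second, your claimed implication $\frac{|A|^2}{M+\sqrt{m|A|}}\gg m^{-1/2}|A|^{3/2}+M^{-1}|A|^2$ is false as written: the left side is comparable to the \emph{minimum} of the two terms, not their sum. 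The energy bound yields the harmonic-mean form, which is how the paper states its own theorems and which suffices for the ``in particular'' clause of the quoted statement.
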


The two approaches used in proving Theorem~\ref{thm:upp2-Fq} and Theorem~\ref{thm:energyt2}, one based on multiplicative combinatorics, the other based on incidences, each have their own advantages.
The former is universal and handles $\F_q$, but it is quantitatively weaker; the latter can be used only in some lower-dimensional groups, but is quantitatively better, asking in particular for no symmetry assumptions, and dealing with $AA$ instead of $AAA$.
The main limitation for Theorem~\ref{thm:energyt2} is that it only works for small sets, and thus cannot pick up the second case of Theorem~\ref{thm:upp2-Fq}; precisely why this happens will be looked at in more detail later in the paper.
Put succinctly, Theorem~\ref{thm:energyt2} is stronger when it applies but Theorem~\ref{thm:upp2-Fq} applies all the time.
We remark that it would be interesting to prove energy results for $SL_2(\F_p)$ and higher dimensional groups.

We will now describe the structure of the rest of the paper.
In Section~\ref{sec:prelim}, we cover some notation as well as some useful lemmas.
Section~\ref{sec:2x2 fq} contains the proof of Theorem~\ref{thm:upp2-Fq}.
Finally, Section~\ref{sec:energy} contains the proofs of our quantitative energy results, first for the upper triangular matrices (Theorem~\ref{thm:energyt2}) and then for the Heisenberg group (Theorem~\ref{thm:energyh}).

\section{Preliminaries}
\label{sec:prelim}
Before we head into the main content of this paper and the proof of Theorem~\ref{thm:upp2-Fq} we will set up our notation, our definitions, and state several useful lemmas from multiplicative combinatorics.

\subsection{Notation and Definitions}
We start by defining our notation for various subgroups, repeating a few definitions made in the introduction so that all of our notation can be found in one place.
Recall that $T_n(\F)$ is the set of invertible upper triangular $n\times n$ matrices with entries in a field $\F$ and that $U_n(\F)$ denotes the subgroup of $T_n(\F)$ consisting of unitriangular matrices (that is, with 1's on the diagonal).
In this paper we will focus on $n=2$ and $\F$ will be a finite field with $q$ elements and characteristic $p>0$.
Let $D_n(\F)$ denote the subgroup of $T_n(\F)$ consisting of diagonal matrices.
We note that there is a short exact sequence
\[
1\to U_n(\F)\to T_n(\F)\xrightarrow{\pi} D_n(\F)\to 1,
\]
and $T_n(\F)\cong D_n(\F)\ltimes U_n(\F)$.
Explicitly the map $\pi:T_n(\F)\to D_{n}(\F)$ takes an upper triangular matrix and returns the diagonal matrix whose diagonal elements match those of the upper triangular matrix.
In two dimensions that is
$$\pi:\begin{pmatrix}
  a&b\\0&c
\end{pmatrix}\mapsto\begin{pmatrix}
  a&0\\0&c
\end{pmatrix}.$$

Two other types of subgroups of $T_2(\F_q)$ we will be particularly interested in are tori and $\F^*$-potent subgroups.
We define a torus, $T$, to be a subgroup of $T_2(\F)$ conjugate to the diagonal subgroup $D_2(\F)$.
That is \[
T=\left\{
  \begin{pmatrix}
    a & (a-d)z \\
    0 & d \\
  \end{pmatrix}
\colon a,d\in\F^*
\right\},
\]
where $z$ is an element in $\F$.
An $\F^*$-potent group is an abelian subgroup of $T_n(\F_q)$ of the form
\[
H=\left\{
  \begin{pmatrix}
    a & b \\
    0 & a \\
  \end{pmatrix}
\colon a\in\F^*, b\in\F
\right\}.
\]



We use the commutator notation $[g,h]:=g^{-1}h^{-1}gh$ and $[G,H]$ is the subgroup generated by all commutators of the form $[g,h]$ such that $g\in G, h\in H$.
If we are to consider all the conjugates of an element $x$ by a set $A$ we will use the following notation, $x^A:=\{a^{-1}xa\colon a\in A\}$.



\subsection{Multiplicative combinatorics}

If $A$ is a subset of a multiplicative group, we use $A^n$ to denote the set of $n$-fold products of elements of $A$, and we define $A_{(n)}:=(A\cup A^{-1}\cup\{e\})^n$ where $e$ is the identity.
This is useful when we want to be like an approximate group, that is we want to ensure we have inverses and the identity.

The following lemma links small tripling ($|A^3|\ll|A|$) with the size of $A_{(k)}$.
\begin{lem}[Pl\"{u}nnecke-Ruzsa]
  \label{lem:PR-ineq}
Suppose that $A$ is a finite subset of a group such that $|A^3|\leqslant K|A|$.
Then
\[
|A_{(3)}|\leqslant 27K^3|A|,
\]
for all $k\geqslant 3$
\[
|A_{(k)}|\leqslant \left(\frac{|A_{(3)}|}{|A_{(1)}|}\right)^{k-2}|A_{(1)}|,
\]
and hence, for all $k\geqslant 3$,
\[
|A_{(k)}|\leqslant 27^k K^{3(k-2)}|A|.
\]
\end{lem}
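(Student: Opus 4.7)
My plan is to prove the three assertions of the lemma in turn, with the third being a direct consequence of the first two, since $A\subseteq A_{(1)}$ gives $|A_{(1)}|\geqslant|A|$, which forces $|A|^{k-2}/|A_{(1)}|^{k-3}\leqslant|A|$. The only non-trivial tool I will use is Ruzsa's triangle inequality in its non-abelian form: for any finite sets $X,Y,Z$ in a group,
$$|Z|\cdot|XY^{-1}|\leqslant|XZ^{-1}|\cdot|ZY^{-1}|.$$

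For the first bound, I will write $A_{(3)}$ as a union of at most $27$ subsets of the form $C_1C_2C_3$ with each $C_i\in\{A,A^{-1},\{e\}\}$ and show that every such product has size at most $K^3|A|$. The all-$A$ and all-$A^{-1}$ cases follow directly from $|A^3|\leqslant K|A|$ and inversion. Two-fold same-sign products $|AA|$ and $|A^{-1}A^{-1}|$ are bounded by $K|A|$ because $AA\cdot\{g\}\subseteq A^3$ for any $g\in A$, while mixed-sign pairs such as $|AA^{-1}|$ come from a single application of Ruzsa's triangle inequality with a well-chosen auxiliary set, giving $\leqslant K^2|A|$. For the mixed-sign triple products, one or two further applications of Ruzsa's triangle inequality, each time chosen so that the right-hand side involves only previously-bounded quantities, yield the uniform bound $K^3|A|$ in every remaining case. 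Summing over the $27$ patterns gives $|A_{(3)}|\leqslant 27K^3|A|$.

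For the iteration bound, I set $B:=A_{(1)}$, which is symmetric and contains $e$. Applying Ruzsa's triangle inequality with $X=B^{k-2}$, $Y=B^2$, and $Z=B$ (using $B=B^{-1}$) gives $|B|\cdot|B^k|\leqslant|B^{k-1}|\cdot|B^3|$. Iterating the resulting recurrence $|B^k|\leqslant(|B^3|/|B|)\,|B^{k-1}|$ from $k$ down to $3$ produces $|B^k|\leqslant|B|\,(|B^3|/|B|)^{k-2}$, which is exactly the claimed inequality.

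The main obstacle is the case analysis for the triple products in the first part: there is no single symmetric device handling every sign pattern simultaneously, and each mixed-sign case requires its own careful choice of auxiliary set $Z$ so that the right-hand side of Ruzsa's triangle inequality consists only of quantities already controlled at an earlier step. Once these bounds are in place, combining the two main claims with $|A_{(1)}|\geqslant|A|$ yields $|A_{(k)}|\leqslant(27K^3)^{k-2}|A|\leqslant 27^kK^{3(k-2)}|A|$, completing the proof.
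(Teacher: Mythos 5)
Your proposal is correct and follows essentially the route the paper itself indicates: the paper gives no proof beyond noting that the lemma follows from repeated applications of the Ruzsa triangle inequality, citing Tao's Lemma 3.4, and your case analysis of the $27$ sign patterns (with the alternating cases like $AA^{-1}A$ needing two triangle-inequality steps to reach $K^3|A|$) together with the recurrence $|B^k|\leqslant(|B^3|/|B|)|B^{k-1}|$ for the symmetric set $B=A_{(1)}$ is exactly that standard argument. The final combination using $|A_{(1)}|\geqslant|A|$ is also correct.
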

This lemma is obtained from repeated applications of the Ruzsa triangle inequality and more details can be found in Tao's paper~\cite[Lemma 3.4]{tao2008product}.

The next lemma is the orbit-stabiliser theorem for sets, one of many results from group theory that can be adapted for approximate groups \cite[Lemma 4.1]{helfgott2015growth}.
Recall that if group $G$ acts on a set $X$, then the  stabiliser of an element $x$ of $X$, is the subgroup $\stab(x)$ of $G$ consisting of all elements that fix $x$.
\begin{lem}[Orbit-Stabiliser Theorem for sets]
  \label{lem:orb-stab}
Suppose the group $G$ acts on a set $X$, $x\in X$, and $A\subseteq G$ is finite.
Then there exists $a_0$ in $A$ such that
\begin{equation}
  \label{eq:orb-stab-lower}
  |(a_0^{-1}A)\cap\stab(x)|\geqslant\frac{|A|}{|A(x)|},
\end{equation}
and for all finite sets $B\subseteq G$,
\begin{equation}
  \label{eq:orb-stab-upper}
  |AB|\geqslant |\stab(x)\cap B||A(x)|.
\end{equation}
\end{lem}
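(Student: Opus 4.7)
The plan is to prove both bounds by the same pigeonhole idea, namely that the orbit map $a \mapsto a(x)$ partitions $A$ into fibers of controlled size.

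For the first inequality, I would define the map $\phi: A \to A(x)$ by $a \mapsto a(x)$. Since $A(x) = \phi(A)$, pigeonhole gives some $y \in A(x)$ with a fiber of size at least $|A|/|A(x)|$. Pick any $a_0 \in A$ with $a_0(x) = y$. For every other $a \in A$ in the same fiber, $(a_0^{-1} a)(x) = a_0^{-1}(y) = x$, so $a_0^{-1} a \in \stab(x)$. Thus $a_0^{-1} A \cap \stab(x)$ contains the image of the chosen fiber under $a \mapsto a_0^{-1} a$, which is an injection, giving the claimed bound.

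For the second inequality, I would construct an injection from $A(x) \times (\stab(x) \cap B)$ into $AB$. For each $y \in A(x)$, use the axiom of choice to pick one representative $a_y \in A$ with $a_y(x) = y$, and define $\psi(y, s) = a_y s$. To see injectivity, suppose $a_{y_1} s_1 = a_{y_2} s_2$ with $s_i \in \stab(x) \cap B$. Apply both sides to $x$: since $s_i(x) = x$, we get $y_1 = a_{y_1}(x) = a_{y_2}(x) = y_2$, so $a_{y_1} = a_{y_2}$, and cancellation in the group forces $s_1 = s_2$. Hence $|AB| \geq |A(x)| \cdot |\stab(x) \cap B|$.

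Neither step is really an obstacle — both are one-line pigeonhole or injection arguments. If anything, the only subtlety is making sure to pick the representatives $a_y$ first (before forming $\psi$) so that the cancellation in the group, not in the action, is what yields injectivity; a naïve parametrisation by all of $A$ would overcount exactly by stabiliser cosets.
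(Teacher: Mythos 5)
Your proof is correct, and it is essentially the standard argument (pigeonhole on the fibers of the orbit map for \eqref{eq:orb-stab-lower}, and an injection $A(x)\times(\stab(x)\cap B)\to AB$ via chosen coset representatives for \eqref{eq:orb-stab-upper}), which is exactly the argument behind the result the paper cites from Helfgott rather than reproving. Nothing is missing; the point you flag about choosing the representatives $a_y$ before defining the injection is indeed the only subtlety.
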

We often specialise Lemma~\ref{lem:orb-stab} to the action of a group $G$ on a subgroup $H$ by left multiplication, so that the stabiliser of $H\in G/H$ is $H$ itself and the orbit of $H$ under a set $A$ is $AH/H$, which is the number of distinct coset representatives in $A$.
If $\psi\colon G\to G/H$ is the quotient map, then $A(H)=AH/H=\pi(A)$.
The subgroup $H$ does not need to be normal.

We use the following lemma when we wish to move from growth in a group to growth in a subgroup.
\begin{lem}
\label{lem:growth-in-subgroups}
Suppose that $H$ is a subgroup of $G$ and that $A\subseteq G$ satisfies $|A^3|\leqslant K|A|$.
If $B:=A^{-1}A\cap H$, then
\[
|B^k|\leqslant |A_{(2k)}\cap H| \ll K^{6k-3}|B|.
\]
\end{lem}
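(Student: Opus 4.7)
The plan is to prove the two inequalities separately, with the first being essentially a direct consequence of the definitions and the second relying on the Ruzsa covering lemma combined with Plünnecke-Ruzsa.

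For the first inequality, I would simply observe that $B = A^{-1}A \cap H \subseteq A_{(2)} \cap H$. Since $H$ is a subgroup, $B^k \subseteq H$; and since $A_{(2)}^k = A_{(2k)}$ (this just follows from unrolling the definition of $A_{(n)} = (A \cup A^{-1} \cup \{e\})^n$), we also have $B^k \subseteq A_{(2k)}$. Intersecting gives $B^k \subseteq A_{(2k)} \cap H$.

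For the second inequality, set $S := A_{(2k)} \cap H$ and apply a Ruzsa covering argument with $A^{-1}$ as the base set. First I would note $S A^{-1} \subseteq A_{(2k+1)}$, and invoke Lemma~\ref{lem:PR-ineq} to get $|S A^{-1}| \leqslant |A_{(2k+1)}| \leqslant 27^{2k+1} K^{6k-3} |A|$. Then the Ruzsa covering lemma produces a set $C \subseteq S$ with $|C| \ll K^{6k-3}$ such that $S \subseteq C \cdot A^{-1}(A^{-1})^{-1} = C \cdot A^{-1}A$.

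The crucial (and the only slightly delicate) point is that the covering set $C$ can be chosen inside $S$, hence inside $H$: for any $s \in S$, write $s = c \cdot x$ with $c \in C \subseteq H$ and $x \in A^{-1}A$. Then $x = c^{-1}s \in H$, so in fact $x \in A^{-1}A \cap H = B$. Thus $S \subseteq CB$, and $|S| \leqslant |C| \cdot |B| \ll K^{6k-3} |B|$, which is exactly what we need.

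The main ``obstacle'' is really just keeping the bookkeeping straight: one must use $SA^{-1}$ rather than $SA$ so that the Ruzsa covering lemma outputs a factor of $A^{-1}A$ matching the definition of $B$; using $SA$ instead would only bound $S$ in terms of $|AA^{-1} \cap H|$, which is a priori a different (though comparable) quantity. Everything else is a bounded combination of Lemma~\ref{lem:PR-ineq} and the standard Ruzsa covering lemma.
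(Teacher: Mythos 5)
Your proof is correct, but it takes a genuinely different route from the paper for the main inequality. The paper proves $|A_{(2k)}\cap H|\ll K^{6k-3}|B|$ via its orbit--stabiliser theorem for sets (Lemma~\ref{lem:orb-stab}), applied to the action on cosets of $H$: the lower bound there gives $|B|\geqslant |A|/|A(H)|$, the upper bound gives $|A_{(2k)}\cap H|\leqslant |AA_{(2k)}|/|A(H)|$, and Pl\"unnecke--Ruzsa bounds $|AA_{(2k)}|\leqslant |A_{(2k+1)}|\ll K^{6k-3}|A|$, so the coset count $|A(H)|$ cancels. You instead invoke the Ruzsa covering lemma with base set $A^{-1}$, and your key observation --- that the covering set $C$ can be chosen inside $S=A_{(2k)}\cap H$, so that the translates $c^{-1}s$ land in $A^{-1}A\cap H=B$ --- is exactly what makes the covering argument close; your remark about needing $SA^{-1}$ rather than $SA$ (to get $A^{-1}A$ and hence $B$, not $AA^{-1}\cap H$) is also accurate. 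Both arguments yield the same exponent $K^{6k-3}$ with a $k$-dependent implicit constant ($27^{2k+1}$ or so), and your first inclusion $B^k\subseteq A_{(2k)}\cap H$ is the same as the paper's. The trade-off is that the paper's route uses only lemmas it has already stated, keeping the exposition self-contained, whereas yours imports the (standard, but unstated here) Ruzsa covering lemma; in exchange, the covering argument gives slightly more structural information, namely that $A_{(2k)}\cap H$ is covered by $O(K^{6k-3})$ left translates of $B$ by elements of $H$, rather than just a cardinality bound.
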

\begin{proof}
By Lemma~\ref{lem:orb-stab} Equation~(\ref{eq:orb-stab-lower}) where we have specialised to the action of a group $G$ on a subgroup $H$ by left multiplication, we have $|B|\geqslant |A|/|A(H)|$ (where $A(H)$ number of distinct cosets of $H$ determined by elements of $A$).
On the other hand, $B^k \subseteq (A^{-1}A)^k\cap H\subseteq A_{(2k)}\cap H$.

So using Lemma \ref{lem:PR-ineq} we have that $$|B^k| \leqslant |(A^{-1}A)^k\cap H|\leqslant |A_{(2k)}\cap H|.$$

By Lemma~\ref{lem:orb-stab} Equation~(\ref{eq:orb-stab-upper}), taking $A$ as $A$, and $B$ as $A_{(2k)}$ we also have
$$|AA_{(2k)}|\geqslant |A_{(2k)}\cap H||A(H)|.$$

So, combining the above and using Lemma~\ref{lem:PR-ineq} to pass from $|A_{(2k)}|$ to $|A|$, we get that $$|B^k|\leqslant |A_{(2k)}\cap H| \leqslant |AA_{(2k)}|/|A(H)|\ll K^{6k-3}|A|/|A(H)|\leqslant K^{6k-3} |B|.$$
\end{proof}

Given a function $f\colon A\to B$, we say that $\phi\colon B\to A$ is a \emph{right inverse}\/ if $f\circ \phi(x)=x$ for all $x\in B$.
\begin{lem}
  \label{lem:covering-by-kernel}
Let $N$ be a normal subgroup of $G$, let $\pi\colon G\to G/N$ be the quotient map, and let $A$ be a finite subset of $G$.
Let $k$ be a positive integer and let $\phi\colon\pi(A^k)\to A^k$ be a right inverse.
Then for all $a$ in $A^k$, we have
\[
a\in\phi(\pi(a))(A^{-k}A^k\cap N).
\]
Hence
\[
A^k\subseteq \phi(\pi(A^k))(A^{-k}A^k\cap N).
\]
\end{lem}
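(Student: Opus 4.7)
The plan is to proceed directly from the definitions: unpack what it means for $\phi$ to be a right inverse of $\pi$, and combine this with normality of $N$ to express $a$ as a product of $\phi(\pi(a))$ and something lying in $A^{-k}A^k \cap N$.

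First I would fix $a \in A^k$ and set $b := \phi(\pi(a))$. Since $\phi$ is a right inverse of $\pi$, we have $\pi(b) = \pi(\phi(\pi(a))) = \pi(a)$, so $\pi(b^{-1}a) = \pi(b)^{-1}\pi(a) = e$, which means $b^{-1}a \in \ker \pi = N$. Meanwhile $b \in A^k$ (because $\phi$ takes values in $A^k$) and $a \in A^k$, so $b^{-1}a \in A^{-k}A^k$. Putting these together yields $b^{-1}a \in A^{-k}A^k \cap N$, and therefore
\[
a = b \cdot (b^{-1}a) \in \phi(\pi(a))\,(A^{-k}A^k \cap N),
\]
which is the first claim. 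The second claim follows immediately by taking the union over $a \in A^k$, since each $\phi(\pi(a)) \in \phi(\pi(A^k))$.

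There is essentially no obstacle here; the only thing to be careful about is that $\phi$ need not be a homomorphism (it is merely a set-theoretic section), so we should not try to commute $\phi$ past anything. All that is used is the identity $\pi \circ \phi = \mathrm{id}_{\pi(A^k)}$ together with the fact that the kernel of $\pi$ is $N$. Normality of $N$ is not even strictly needed to state the conclusion, although it is the usual hypothesis making $G/N$ a group.
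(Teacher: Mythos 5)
Your proof is correct and is essentially the same argument the paper relies on: the paper does not prove the lemma itself but cites it as part (iv) of Tointon's Lemma 2.12 (Tao's splitting lemma), noting only that it follows because $\phi(\pi(a))^{-1}a$ lies in $A^{-k}A^k\cap N$, which is exactly the computation you carry out. Your added remark that $\phi$ is only a set-theoretic section and that nothing beyond $\pi\circ\phi=\mathrm{id}$ and $\ker\pi=N$ is used is accurate.
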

This lemma is part iv of Lemma 2.12 (Tao's splitting Lemma) in Tointon's paper~\cite{tointon2012freimans}, which itself follows Tao~\cite{tao2006product}.
It essentially comes from seeing that the identity is contained in $A^{-k}A^k\cap N$ and $a\in\phi(\pi(a))$.

The proof of Theorem~\ref{thm:upp2-Fq} also requires the following sum-product result \cite[Theorem D]{murphy2017products}.
\begin{prop}
  \label{prop:Fq-sum-product}
Let $X$ be a finite subset of an $\F_q$-vector space, let $D\subseteq\F_q$ be a set of scalars, and let $F=\gen D$ be the subfield generated by $D$.
If $|X+DX|\leqslant K|X|$ for some $K\geqslant 1$, then either $K\geqslant |D|^{1/10}$ or
\[
|X|\geqslant \frac 1{2K^4}|\mathrm{Span}_F(X)|,
\]
and
\[
\mathrm{Span}_F(X)\subseteq 4DX-4DX.
\]
\end{prop}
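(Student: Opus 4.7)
The plan is to iterate the hypothesis via Pl\"unnecke--Ruzsa estimates and a closure argument, showing that the symmetric set $Y:=4DX-4DX$ is approximately closed under addition and under scaling by every $d\in D$, and therefore contains $\mathrm{Span}_F(X)$ unless a sum-product dichotomy forces $K\geqslant|D|^{1/10}$.

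First I would apply Pl\"unnecke--Ruzsa to the pair $(X,DX)$: the hypothesis $|X+DX|\leqslant K|X|$ already gives $|DX|\leqslant K|X|$ (fix any $x_0\in X$ and use $x_0+DX\subseteq X+DX$), and then $|nDX-mDX|\leqslant K^{O(n+m)}|X|$ for any fixed $n,m$. In particular $|Y|\leqslant K^{O(1)}|X|$, $Y$ is symmetric, and $0\in Y$, so if I can prove $\mathrm{Span}_F(X)\subseteq Y$ then the size bound $|X|\geqslant|\mathrm{Span}_F(X)|/(2K^4)$ falls out of an honest bookkeeping of the Pl\"unnecke constants. Closing $Y$ under addition is cheap (one uses $Y+Y\subseteq 8DX-8DX$, whose size is still $K^{O(1)}|X|$), so the substantive content is closure of $Y$ under scaling by $D$.

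The main step is to show that for each $d\in D$ one essentially has $dY\subseteq Y$, i.e.\ $dDX-dDX\subseteq 4DX-4DX$. Here I would run a sum-product dichotomy for $D$ inside $\F_q$: either iterating the maps $D\mapsto D+D$ and $D\mapsto D\cdot D$ a bounded number of times reaches the whole subfield $F=\gen{D}$ while inflating $D$ by a factor less than $|D|^{1/10}$, in which case every element of $F$ can be written as a short polynomial expression in elements of $D$ and hence every element of $F\cdot X$ lies in $4DX-4DX$; or else the iteration inflates $D$ beyond $|D|^{1/10}$, which is detectable in $|X+DX|$ itself via a Ruzsa triangle computation, forcing $K\geqslant|D|^{1/10}$ and placing us in the first alternative.

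The hard part will be respecting the precise ``budget'' of exactly four positive and four negative $DX$-terms: it is not enough to show $\mathrm{Span}_F(X)\subseteq NDX-NDX$ for some large $N$, one needs the constant $4$. This is where a sharp finite-field sum-product theorem with the specific $1/10$ exponent must be invoked, applied to the multiplier set $D$, and used to rewrite a generic polynomial representation of an element of $F$ as a genuine sum of at most four products $d\cdot d'$ minus four such products acting on $X$. Once this normal form is secured, combining it with the Pl\"unnecke bound $|4DX-4DX|\leqslant 2K^4|X|$ from the first step yields both conclusions of the proposition at once.
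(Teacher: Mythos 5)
Your proposal has genuine gaps, all concentrated at the step you yourself flag as the crux. First, the mechanism you propose for closing $Y=4DX-4DX$ under scaling by $D$ --- that ``iterating the maps $D\mapsto D+D$ and $D\mapsto D\cdot D$ a bounded number of times reaches the whole subfield $F=\gen D$ while inflating $D$ by a factor less than $|D|^{1/10}$'' --- is false in general. Take $D=\{1,\theta\}$ with $\theta$ a generator of $\F_q$ over $\F_p$: then $F=\F_q$, but any bounded number of sum/product iterations starting from a two-element set produces a set of bounded size, so it cannot come close to covering $F$; in general $|F|$ can be super-polynomial in $|D|$, so no ``short polynomial expression in elements of $D$'' normal form for elements of $F$ exists. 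The additive richness that makes $\mathrm{Span}_F(X)$ coverable must come from $X$, not from $D$. Second, your fallback branch --- that if the iteration inflates $D$ then this ``is detectable in $|X+DX|$ via a Ruzsa triangle computation, forcing $K\geqslant|D|^{1/10}$'' --- is also false: if $X$ is itself an $F$-subspace then $K=1$ regardless of how wildly $D$ grows under its own sums and products, so growth of $D$ alone forces nothing about $K$. Third, at the decisive moment you invoke ``a sharp finite-field sum-product theorem with the specific $1/10$ exponent, applied to the multiplier set $D$'': that is precisely the statement being proved (it is Theorem D of \cite{murphy2017products}), so the argument is circular exactly where the work lies; Pl\"unnecke--Ruzsa and Ruzsa-triangle manipulations alone cannot produce the $|D|^{1/10}$ threshold. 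Finally, even granting your closure step, your bookkeeping yields $|\mathrm{Span}_F(X)|\leqslant|4DX-4DX|\leqslant K^{8}|X|$, not the claimed $2K^{4}$.

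For contrast, the paper does not reprove the dichotomy at all: it quotes \cite[Theorem D]{murphy2017products} for the statement that either $K\geqslant|D|^{1/10}$ or $|X|\geqslant|\mathrm{Span}_F(X)|/(2K^4)$, and only supplies the final containment, using an argument from Section 4.4 of that reference. There one finds a single ratio $\xi_0=(x_1-x_2)/(d_1-d_2)$ with $x_i\in X$, $d_i\in D$, such that $|X+D\xi_0|>\frac12|W|$, where $W=\mathrm{Span}_F(X)$. The dilated set $S:=(d_1-d_2)X+D(x_1-x_2)$ then lies in $W$ (since $W$ is an $F$-vector space and $D\subseteq F$), is contained in $2DX-2DX$, and has $|S|=|X+D\xi_0|>\frac12|W|$; since any subset of a finite abelian group with density greater than $1/2$ satisfies $S+S=W$ (the Cauchy--Davenport-type statement \cite[Lemma 2.1]{helfgott2008growth}), it follows that $W\subseteq S+S\subseteq 4DX-4DX$. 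This single pigeonhole step is the missing idea in your proposal: it is what secures the precise constant $4$, and it is driven by the largeness of $X$ inside its span, not by any closure property of $D$.
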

Thus a finite subset of a vector space which doesn't grow under taking the sum with a dilate of our subset, then either the set of dilates is small or our subset is a large proportion of the vector space it spans over the field generated by the dilates. 

The last part of Proposition~\ref{prop:Fq-sum-product} is not stated explicitly in the reference \cite{murphy2017products}, but follows from arguments in that paper's Section 4.4: there exists an element $\xi_0=(x_1-x_2)/(s_1-s_2)$ with the $x_i$'s in $X$ and the $d_i$'s in $D$ such that $|X+D\xi_0|>\frac 12 |W|$.
Thus $(d_1-d_2)X+D(x_1-x_2)$ is a subset of $W$ with density greater than $1/2$, so the statement follows from a version of the Cauchy-Davenport theorem \cite[Lemma 2.1]{helfgott2008growth}.

\section{Product Theorem in \texorpdfstring{$T_2(\F_q)$}{T2(Fq)}}
\label{sec:2x2 fq}

In this section we will prove our first main result (Theorem~\ref{thm:upp2-Fq}), which we restate before we embark on its proof.
\begin{thm:upp2-Fq}
  \thmtext
\end{thm:upp2-Fq} 
The unipotent subgroup $U$ has the form
\[
U=\left\{
  \begin{pmatrix}
    1 & w \\
    0 & 1 \\
  \end{pmatrix}
\colon w\in W
\right\},
\]
where $W$ is a vector space over a subfield $F\leqslant \F_q$ and is thus a subgroup of the full unipotent subgroup.

We note that this result can be compared with Theorem 35 of Murphy~\cite{BM17}, which looks at the affine case.

\begin{proof}[Proof of Theorem~\ref{thm:2D}]
First, note that
\[
[A,A]=\{[a,a']\colon a,a'\in A\}\subseteq A^4\cap U_2(\F).
\]
Let $u\colon\F_q\to U_2(\F_q)$ be the isomorphism defined by
\[
u\colon x\mapsto
\begin{pmatrix}
1 & x \\
0 & 1 \\
\end{pmatrix}
\]
and let $X:=u^{-1}(A^4\cap U_2(\F_q))$.

Let $\chi\colon T_2(\F_q)\to \F_q^*$ denote the homomorphism defined by
\[
\chi\colon
\begin{pmatrix}
a_{11}&a_{12}\\
0&a_{22}\\
  \end{pmatrix}
  \to a_{11}/a_{22},
\]
and let $D=\chi(A)$.
Since conjugation of $U_2(\F_q)$ by an element $g$ of $T_2(\F_q)$ corresponds to multiplication by $\chi(g)$, we have $u(X+DX)\subseteq A^{10}\cap U_2(\F_q)$.

By Lemma~\ref{lem:growth-in-subgroups},
\[
|X+DX|=|u(X+DX)|\leqslant |A^{10}\cap U_2(\F_q)|\ll K^{30}|A^2\cap U_2(\F_q)|\leqslant K^{30}|X|,
\]
hence by Proposition~\ref{prop:Fq-sum-product}, either $|D|\ll K^C$ or there is a vector space $W$ over the field $F$ generated by $D$ such that $X\subseteq W\subseteq 6DX-6DX$.

In the first case, there is a $\F^*$-potent subset $S$ of $A^2$ such that $|S|\gg K^{-C}|A|$.

In the second case, set $U=u(W)$.
Then $U$ is contained in $A^C\cap U_2(\F_q)$ and $A^2\cap U_2(\F_q)\subseteq U$, so for any set $\Lambda\subseteq A$ of left coset representatives of $A$ modulo $U_2(\F_q)$, we have
\[
A\subseteq \Lambda\cdot U
\]
by Lemma~\ref{lem:covering-by-kernel}.
Since the ratios of the diagonal terms of elements of $\Lambda$ are contained in $D$ and $W$ is a vector space over the field $F$ generated by $D$, the subgroup $U$ is normalised by $\Lambda$, hence $\gen A =\gen{\Lambda}U$.

It remains to show that $\gen A/U$ is abelian.
Let $\phi\colon\gen A\to\gen A/U$ denote the quotient map.
Since $\Lambda\subseteq A$, we have $[\Lambda,\Lambda]\subseteq U$, so
\[
[\phi(\Lambda),\phi(\Lambda)] = \phi([\Lambda,\Lambda])=\{e\},
\]
so $\phi(\Lambda)$ generates an abelian subgroup of $\gen A/U$.
But 
\begin{equation}
  \label{eq:1}
  \gen{\phi(\Lambda)}\cong \gen{\Lambda}U/U=\gen A/U,
\end{equation}
so $\gen A/U$ is abelian, as claimed.
\end{proof}

\section{Energy estimates}
\label{sec:energy}

In this section we will prove Theorems~\ref{thm:energyt2} and~\ref{thm:energyh}. We will follow the ideas from Petridis et al.'s paper~\cite{petridis2019energy} closely. 

\subsection{Energy in \texorpdfstring{$2\times 2$}{2x2} Triangular Matrices}\label{secsub:energyT2}

We start by noting that $T_2(\F_q)$ is the direct product of its centre $\Lambda$, which consists of the diagonal matrices with equal elements on the main diagonal, and a subgroup $\Gamma$ that is isomorphic to the affine subgroup. The projection $\rho\colon T_2(\F_q)\to \Gamma$ is the homomorphism given by
$$
\rho\colon\bpm a & b\\0& c \epm \mapsto \bpm a/c & b/c\\0& 1 \epm\,.
$$
We also note that $\Gamma$ contains the normal unipotent subgroup $U_2(\F_q)$.
For $g\in A$, we call a multiple of $g$ by an element of $\Lambda$ a  {\em dilate} of $g$.

As stated above, the multiplicative energy $E(A)$ of a subset $A$ of a group is defined by
$$
E(A) := | \{(g,h,u,v)\in A^4\colon g^{-1} h= u^{-1} v\}|,
$$
and a standard application of the Cauchy-Schwarz inequality yields the following bound connecting $E(A)$ with $A^{-1}A$.
\begin{equation} \label{CSbasic}
E(A) \geqslant \frac{|A|^4}{|A^{-1}A|}.
\end{equation}
Another nature definition for the multiplicative energy of $A$ is
\[
E^*(A)  :=|\{(g,h,u,v)\in A^4\colon g h= u  v\}|,
\]
which satisfies
\[
E^*(A) \geqslant \frac{|A|^4}{|AA|}.
\]
Shkredov \cite[Section 4]{shkredov2019modular} proved the inequality $E^*(A)\leqslant E(A)$, and so we focus on bounding $E(A)$.

We cannot expect to have a non-trivial upper bound on $E(A)$ unconditionally, since $A$ can lie in a coset of an abelian subgroup.
The maximal abelian subgroups of $T_2(\F_q)$ arise as $\Lambda$ times a maximal abelian subgroup of $\Gamma$.
In particular,
$$ 
\bpm a & b\\0& a \epm \in \Lambda U_2\,\qquad  \bpm a &(c-a)x\\0& c \epm \in \Lambda T\,,
$$
where $T$ is a maximal torus in $\Gamma$.

We use Rudnev's point-plane incidence bound~\cite{Rudnev2018}.
\begin{theorem}
\label{thm:pointplane}
 Let $\F_q$ be a field, and let $P$ and $\Pi$ be finite sets of points and planes
respectively in $\mathbb{P}^3$.
Suppose that $|P| \leqslant |\Pi|$, and that $|P| \ll p^2$.
Let $k$ be the maximum number of collinear points in $P$. Then the number of incidences
satisfies
$$I(P, \Pi) \ll |\Pi||P|^{1/2} + k|\Pi|.$$
\end{theorem}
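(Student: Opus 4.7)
The plan is to follow Rudnev's original proof in~\cite{Rudnev2018}, which reduces the point-plane incidence problem to a line-incidence problem on the Klein quadric and then invokes some non-trivial algebraic geometry.

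First I would apply Cauchy--Schwarz to the incidence count, giving $I(P,\Pi)^2 \leqslant |\Pi|\sum_{\pi\in\Pi}|P\cap\pi|^2$. Interpreting the second moment combinatorially, the right-hand side counts triples $(p_1,p_2,\pi)$ of points in $P$ lying in a common plane $\pi$ of $\Pi$. Separating the diagonal $p_1=p_2$ (which contributes exactly $I(P,\Pi)$) from the off-diagonal, and grouping the off-diagonal by the line $\ell=\overline{p_1p_2}$ that each distinct pair of points spans, the second moment becomes a weighted sum over lines involving $n(\ell):=|P\cap\ell|$ (which is at most $k$ by hypothesis) and $m_\Pi(\ell)$, the number of planes of $\Pi$ containing $\ell$. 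The problem thus reduces to bounding the contribution of ``rich'' lines, meaning lines with both $n(\ell)$ and $m_\Pi(\ell)$ large.

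The crux of the argument is to bound this rich-line contribution. I would lift lines of $\mathbb{P}^3$ to points on the Klein quadric $\mathcal{K}\subset\mathbb{P}^5$ via Plücker coordinates, so that both ``line passes through a point of $P$'' and ``line lies in a plane of $\Pi$'' translate into linear-section conditions on $\mathcal{K}$. Rich lines then correspond to points on $\mathcal{K}$ lying on many such linear sections, a configuration that can be controlled via a Cayley--Salmon-type estimate on lines contained in low-degree ruled surfaces, together with a Kollár-style count of surfaces containing many lines. Pairs of planes through a common line correspond naturally to pencils, which is exactly the structure the Plücker picture is designed to expose.

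The main obstacle, and the source of the density hypothesis $|P|\ll p^2$, is adapting the characteristic-zero algebraic-geometric machinery to $\F_q$: the Cayley--Salmon / Kollár theorems work cleanly in characteristic zero but can fail in characteristic $p$ when the underlying surfaces become non-reduced or acquire anomalous line configurations. The density bound $|P|\ll p^2$ is exactly what is needed to rule out such degenerations, so that the classical ruled-surface count survives over $\F_q$. Once the rich-line bound is in place, back-substituting it into the Cauchy--Schwarz step yields the claimed estimate $I(P,\Pi)\lesssim |\Pi||P|^{1/2} + k|\Pi|$, with the first term coming from the ``generic'' incidences and the second from lines that realize the extremal collinearity $k$.
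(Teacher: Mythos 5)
First, a point of orientation: the paper does not prove Theorem~\ref{thm:pointplane} at all. It is Rudnev's point--plane incidence theorem, quoted verbatim from \cite{Rudnev2018} and used as a black box in Section~\ref{sec:energy}, so the only meaningful comparison is with Rudnev's published argument. Your outline names the right toolkit (Pl\"ucker/Klein quadric, Guth--Katz-style algebraic machinery, Koll\'ar's positive-characteristic input, the $|P|\ll p^2$ constraint originating there), but it does not follow Rudnev's route, and as written it has a genuine gap in the middle rather than just suppressed detail.

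Concretely: after your Cauchy--Schwarz step you must bound $\sum_{\ell} n(\ell)\bigl(n(\ell)-1\bigr)m_{\Pi}(\ell)$, where $n(\ell)=|P\cap\ell|$ and $m_{\Pi}(\ell)$ is the number of planes of $\Pi$ containing $\ell$, by roughly $|P||\Pi|+k^2|\Pi|$ in order to recover the stated estimate. You never formulate this target, and the machinery you invoke (Cayley--Salmon/flecnodal arguments and Koll\'ar-type counts of lines in low-degree ruled surfaces) controls intersections \emph{among lines}, not the number of planes of $\Pi$ through a $P$-rich line; passing from one to the other needs a further dualization that the sketch does not supply, and it is not clear the reduced quantity is any easier than the original problem. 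Rudnev's actual proof avoids the second-moment step entirely: each point $q$ corresponds to an $\alpha$-plane of the Klein quadric (lines through $q$) and each plane $\pi$ to a $\beta$-plane (lines inside $\pi$); these meet precisely when $q\in\pi$, in which case they meet in a line of the quadric (the pencil of lines through $q$ in $\pi$). Intersecting with a generic hyperplane of $\mathbb{P}^5$ and projecting generically turns $P$ and $\Pi$ into two families of lines in three-space whose intersecting pairs are in bijection with the incidences, and the theorem then follows from the Guth--Katz/Koll\'ar bound: for two families of $N\leqslant M$ lines with $N\ll p^2$ and at most $k$ lines concurrent or coplanar in the relevant family, the number of intersecting pairs is $\ll MN^{1/2}+kM$. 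This is also where the hypotheses $|P|\leqslant|\Pi|$ and $|P|\ll p^2$ actually enter (the characteristic-$p$ obstruction is the failure of the flecnodal/Cayley--Salmon step for too many lines, e.g.\ Hermitian-surface-type examples), not as a device for excluding non-reduced surfaces in your rich-line configuration. If you want to keep the Cauchy--Schwarz opening you would be constructing a different proof, and you would still need to state and correctly deploy the two-family line--line intersection theorem; as it stands, the central step of your argument is not reduced to any precise known statement.
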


We note that although this is a result over $\F_q$ we are still restricted in terms of $p$ and need small sets (about at most $p$).
As we will explain after the proof this is why we have some differences with Theorem~\ref{thm:upp2-Fq}. We will be following the ideas from \cite{petridis2019energy} in the following.

Before proving Theorem~\ref{thm:energyt2}, we restate it for the reader's convenience.
\begin{thm:energyt2}
Let $A\subseteq T_2(\F_q)$ and $M_1$ be the maximum number of elements of $A$ in a coset of $\Lambda T$, $M_2$ be the maximum number of elements of $A$ in a coset of $\Lambda U_2$, $M_3$ in a coset of $U_2$, suppose 
$|A|M_3\leqslant p^2$.

Then we have the following energy estimate
\[
E(A) \lesssim |A|^{5/2} M_2^{1/2} + |A|^2 M_1,
\]
and hence
$$
|A^{-1}A|,\,|AA| \gtrsim \frac{ |A|^{2} } { M_1+ \sqrt{|A| M_2}}.
$$

\end{thm:energyt2}
\begin{proof}
We begin by splitting $A$ into dyadic pieces, then we estimate the energy of each dyadic piece by partitioning them further and controlling these with the point-plane theorem.
 
 Define $A_m\subseteq A$ by
\[
A_m:=\{g\in A\colon m\leq|A\cap g\Lambda|<2m\}.
\]
Then
\[
A=\bigcup_{j=0}^M A_{2^j},
\]
where $M\ll \log |A|$.
Thus by two applications of Cauchy-Schwarz, we have
\begin{equation}
    \label{eq:energy-of-union}
    E(A) = E(\cup_{j=0}^M A_{2^j}) \lesssim \sum_{j=0}^M E(A_{2^j}).
\end{equation}

We will show that for any $m\geq 1$, we have
\begin{equation}
\label{eq:energy-bound-for-dyadic-piece}
    E(A_m)\ll |A_m|^{5/2}M_2^{1/2}+M_1|A_m|^2,
\end{equation}
provided that $M_3|A_m|\ll p^2$.
Since $M_3|A|\ll p^2$ by assumption, we have $E(A_{2^j})\ll |A|^{5/2}M_2^{1/2}+M_1|A|^2$ for all $j$, so we may bound the right-hand side of equation~\ref{eq:energy-of-union} by taking a supremum:
\[
E(A)\lesssim|A|^{5/2}M_2^{1/2}+M_1|A|^2.
\]
Thus the proof is complete, pending the proof of equation~\ref{eq:energy-bound-for-dyadic-piece}.

We write
\begin{equation}
Q(A_m):=\{(g,h,u,v) \in A_m^4 :g^{-1} h = u^{-1}v\},
\label{e:quad}\end{equation}
so that $E(A_m) = |Q(A_m)|$.
We will write elements of $T_2(\F_q)$ as
$$ 
g=\bpm g_1 & g_2\\0& g_3 \epm,
$$
adopting the convention that if $h\in T_2(\F_q)$, then $h_1,h_2,h_3$ bear the same relationship to $h$ as $g_1,g_2,g_3$ do to $g$.

To proceed, we partition the set $Q(A_m)$ into pieces, which we will then be able to control them via the point-plane incidence bound (Theorem~\ref{thm:pointplane}).

We can split the energy of $A$ up as follows
\[
E(A_m)=\sum_{C_1,C_3} |\{(g,h,u,v) \in Q(A_m)\colon  g_1v_1=C_1=h_1u_1, g_3v_3=C_3=h_3u_3\}|.
\] 
To see that this is correct, suppose that $g^{-1} h = u^{-1} v$.
Then, by comparing the three entries in the matrices, we have a set of three equations:
\begin{equation}\label{e:coord}
g_1v_1 = h_1u_1,\qquad g_3v_3= h_3u_3,\qquad u_1h_2 - u_1h_3\frac{g_2}{g_3}  =g_1v_2-g_1v_3\frac{u_2}{u_3}.
\end{equation}
We write $Q_{C_1,C_3}$ for the set of solutions corresponding to the fixed pair of values  $C=(C_1,C_3)$ in the decomposition above:
\[Q_C=Q_{C_1,C_3}:=|\{(g,h,u,v) \in Q(A_m) : g_1v_1=C_1=h_1u_1\,, \; \;g_3v_3=C_3=h_3u_3\}|.
\]
We define additional sets, also indexed by pairs of values $C=(C_1,C_3)$ from the above decomposition:
\[
\cP_C =\cP_{C_1,C_3}: = \{(g,v) \in A_m \times A_m : g_1v_1=C_1,\;\; g_3v_3 = C_3\}.
\]

Next, we seek to bound the quantity $Q_{C}$ using Theorem~\ref{thm:pointplane}.
The last equation in \eqref{e:coord} is the condition that represents point-plane incidences.
The planes are given by projective co-vectors
$$\left(-u_1h_2:u_1h_3:1:-\frac{u_2}{u_3}\right)\,,$$
and points by projective vectors
$$
\left(1:\frac{g_2}{g_3}:g_1v_2:g_1v_3\right) = \left(1:\frac{g_2}{g_3}:g_1v_2: C_3\frac{g_1}{g_3}\right)\,.
$$
The points and planes as above are multisets, since the ratios $\frac{g_2}{g_3}$ and $\frac{g_1}{g_3}$ are defined module $\Lambda$.
By the definition of $A_m$, each point and plane occurs with multiplicity (or ``weight'') $\sim m$.

Thus, to account for the weights, we consider the worst possible case. This is when the number of points/planes is equal to $|\cP_C|/m$ and each incidence is counted $m^2$ times

\begin{equation}
Q_C\ll | \cP_C |^{3/2} \sqrt{m} +  (km) | \cP_C |\,,
\label{ppest}\end{equation}
where $k$ is the maximum number of collinear points and the latter estimate is valid provided that we have the following which is due to the $p$-constraint on the application of the point-plane. 
\begin{equation}\label{ppconstr}
| \cP_C | \leqslant mp^2.
\end{equation}
We remark that if points in $\cP_C$ are collinear, then their projection on the coordinates $(1:\frac{g_2}{g_3}:\frac{g_1}{g_3})$ are collinear, so we get a line in $\Gamma$.
A line in $\Gamma$ is a coset of a torus $T$ or the unipotent group $U_2(\F_q)$.
If the line (coset in $\Gamma$) has $k$ elements and each element has $\sim m$ dilates in $T_2(\F_q)$, the quantity $km$ is bounded by the maximum number of elements of $A$ in a coset of $\Lambda T$ or $\Lambda U_2$, which we bound by $M_1+M_2$.

To continue, we will sum the estimate \eqref{ppest} over all values of $(C_1,C_3)$ using the fact that  
\[
\sum_{(C_1,C_3)}| \cP_{(C_1,C_3)} | = |A_m|^2,
\]
and a supremum estimate for $| \cP_C|$.
Namely, for a fixed $C=(C_1,C_3)$, if we consider the set $\cP_C$, if we know $g\in A_m$ then we know $v_1$ and $v_3$. The maximum number of $v_2$, knowing $v_1$ and $v_3$ is the maximum number $n$ of elements of $A_m$ in a coset of $U_2$.
Since each element has $\sim m $ dilates in $A_m$, then $n\ll M_2/m$.

Hence by summing estimate \eqref{ppest} over all values of $(C_1,C_3)$ and using the above facts we have that
\begin{equation}\label{bound:energy}
E(A_m) \ll |A|^{5/2} M_2^{1/2} + |A|^2 M_1,
\end{equation}
and this is valid, by \eqref{ppconstr} as long as 
\[
| \cP_C|\leqslant  |A_m|n \leqslant m p^2,
\]
this constraint appears with $m=1$ and $M_3$ replacing $n$ in the statement of the theorem and thus we are done. \end{proof}

Both this result and Theorem~\ref{thm:upp2-Fq} say that one obstructure to growth is being close to a coset of one of the following subgroups
$$ 
\bpm a & b\\0& a \epm \in \Lambda U_2\,\qquad  \bpm a &(c-a)x\\0& c \epm \in \Lambda T.
$$
However, because we require that $|A|M_3\leqslant p^2$ in Theorem~\ref{thm:energyt2}, we do not have enough elements to fill up a unipotent subgroup, and so we never encounter the second case of Theorem~\ref{thm:upp2-Fq} when $U$ is not trivial ($U$ being trivial is the torus case here).
This highlights a weakness of this second approach, though it is a much stronger result quantitatively.

 \subsection{Energy in the Heisenberg Group}
 
In this subsection we prove a similar energy bound as for the Heisenberg group, generalising results of Shkredov~\cite{shkredov2019remarks}.
We denote the Heisenberg group by
\[
H=H(\F_q):=\left\{\bpm 1& g_1 & g_3\\0&1&g_2\\0&0&1\epm:g_1,g_2,g_3\in\F_q\right\},
\]
where we write
$$
 g=(g_1,g_2,g_3)\qquad \mbox{or}\qquad g=\bpm 1& g_1 & g_3\\0&1&g_2\\0&0&1\epm, 
$$
for a specific element of $H$. 

The multiplication rule and other useful relations are as follows:
 $$gh=(g_1+h_1,g_2+h_2,g_3+h_3+g_1h_2)$$
 $$g^{-1}=(-g_1,-g_2,-g_3+g_1g_2),\,$$
 $$g^{-1}\cdot h = (h_1-g_1,h_2-g_2, (h_3-g_3)+g_1(g_2-h_2)\,.
 $$
We also note that $(0,0,0)$ is the identity element.

 Moving on to the proof, we will follow very much the same ideas as in the proof of Theorem~\ref{thm:energyt2}.
 
Recall the statement we wish to prove and that $L$ is a linear subspace in the $(g_1,g_2)$ variables and $Z$ is the centre of $H$.
\begin{thm:energyh} Let $A\subset H(\F_q),$ with $|A|m\leqslant p^2$, where $m$ is the maximum number of elements in  a coset of $Z$. If $M$ is the maximum number of elements in  a coset of $LZ$, then  for $m\leqslant \sqrt{|A|}$ we have the estimate $$E(A) \ll |A|^{5/2} m + |A|^2 M,$$ and hence,
$$
|A^{-1}A|,\,|AA| \geqslant \frac{ |A|^{2} } { M+ m\sqrt{|A|} }\,,
$$
\end{thm:energyh}
We note that unlike the $T_2(\F_q)$ case we do not lose any log factors in the bound for the Heisenberg group.

 \begin{proof}
 We will reuse the notation 
\begin{equation}
Q(A):=\{(g,h,u,v) \in A^4 :g^{-1} \cdot h = u^{-1} \cdot v\}.
\end{equation}
 
 Equating $ g^{-1}\cdot h=u^{-1}\cdot v$, and fixing
 $$
 h_1+u_1 = C_1 = g_1+v_1,\qquad h_2+u_2 = C_2 = g_2+v_2\,,
 $$
 then the third equation is 
 \begin{equation}
 [-(g_3+v_3) + g_1g_2 -C_2g_1]+g_1u_2-g_2u_1 + [h_3+u_3 - u_1u_2+C_2u_1]=0\,.
 \label{e:coord2}\end{equation}
 As before this is a weighted point-plane equation.  Following the same general method, let $Q_{C_1,C_2}$ denote the set of solutions corresponding to the fixed pair of values  $C=(C_1,C_2)$ in the decomposition above, that is,
\[Q_C=Q_{C_1,C_2}:= \left|\left\{(g,h,u,v) \in Q(A) : \begin{array}{l}g_1+v_1=C_1=h_1+u_1\,\\  g_2+v_2=C_2=h_2+u_2\end{array}\,\right\}\right|.\]
 
 Still following the previous subsections notation, only now $C=(C_1,C_2)$, we define the set 
\[
\cP_C =P_{C_1,C_2}: = \{(g,v) \in A \times A : g_1+v_1=C_1,\;\; g_2+v_2 = C_2\}.
\]
 As before we are going to bound the quantity $Q_{C}$ using Theorem~\ref{thm:pointplane}. Equation \eqref{e:coord2} represents point-plane incidences, where points are given by projective vectors and planes by projective co-vector
\begin{equation}
(-(g_3+v_3) + g_1g_2 -C_2g_1: g_1:g_2:1)\,,
\qquad
 (1: u_2:-u_1:h_3+u_3 - u_1u_2+C_2u_1)\,.\label{hpoint}
\end{equation}

 The points and planes come with multiplicity as they did in the last case. This multiplicity, in the case of points, is the number of realisations of the sum $g_3+v_3$. Observe that given $C_1,C_2$ and knowing $g_1,g_2$ we then known $v_1,v_2$.  Thus the maximum number of realisations of the sum $g_3+v_3$ (since $g_1,g_2$, as well $v_1,v_2$ are fixed for a given point  is bounded by the maximum number of elements of $A$ in a coset of $U$, which we denote as $m$.

We continue by applying the point-plane bound,
as in the previous section, only now $C=(C_1,C_2)$.
Again, to account for weights, we consider the worst possible case, when the number of points/planes is equal to $|\cP_C|/m$, each incidence is counted $m^2$ times. Summing over $C$ we obtain, once again, the estimate \eqref{ppest}. For the $p$-constraint on the application of the point-plane theorem we take the most ample $|\cP_C|\leqslant p^2$, i.e. the case $m=1$.

Observe that geometrically, for a given value of $(g_3+v_3)$, \eqref{hpoint} is a quadric over the $(g_1,g_2)$ plane, and hence $k$ is bounded by the number of collinear points in the $(g_1,g_2)$ plane, and the $km$ term can be interpreted as the maximum number of elements of $A$ in a coset of a subgroup $H=LZ$.

Summing over $C$, since $|\cP|\leqslant |A|m$, we get the analogue of \eqref{bound:energy} as follows:
 
\begin{equation}\label{bound:energy:h}
E(A) \ll |A|^{5/2} m + |A|^2 M\,,
\end{equation}
where $M$ is the maximum number of elements in a coset of $LZ$. Hence, we have established the theorem.
\end{proof}

Note that the result is sharp in the sense that if $m\gg \sqrt{|A|}$ we can have $E(A)\gg |A|^3$.  Indeed, take the example given in Equation~\ref{example1} from the introduction we he have $A=\{(g_1,g_2,g_3)\} \cong [1,\ldots n]\times [1,\ldots n] \times [1,\ldots n^2].$ Then, if, say $g\cdot h=s$, this means
$$
s_1 = g_1+h_1,\qquad s_2=g_2+h_2,\qquad s_3 = g_3+h_3+ g_1h_2\,,
$$
so a typical $s\in AA$ will have $\sim n\times n\times n^2=|A|$ representations as a product of two elements, and $|AA|\sim |A|$. Moreover, further multiplication by elements of $A$ will not cause growth either.

We remark that a stronger (for smaller values of $m$) bound $E(A)$, which would yield $|AA|\gg|A|^{7/4}$ if $m=1$ was recently obtained by Shkredov \cite[Theorem 13]{shkredov2019remarks} who estimated $E(A)$ in the special  Cartesian product case, namely when each component of $g=(g_1,g_2,g_3)$ lies, independently, in some scalar set. This was done by rather similar methods, but  the Cartesian product setting enables one to apply an incidence bound twice, rather than once. In particular observe that if $m=1$, that is $A$ is a graph over a set $B$ in the $(g_1,g_2)$-variables, then $\sum_C|\cP_C|^2$ equals the additive energy of $B$.



\printbibliography {}

@article{breuillard2011approximate-b,
    AUTHOR = {Breuillard, Emmanuel and Green, Ben},
     TITLE = {Approximate groups, {II}: {T}he solvable linear case},
   JOURNAL = {Q. J. Math.},
  FJOURNAL = {The Quarterly Journal of Mathematics},
    VOLUME = {62},
      YEAR = {2011},
    NUMBER = {3},
     PAGES = {513--521},
   MRCLASS = {20H20 (05E99 11B30)},
  MRNUMBER = {2825469},
MRREVIEWER = {Dane L. Flannery},
       
}

@article{helfgott2015growth ,
AUTHOR = {Helfgott, Harald A.},
     TITLE = {Growth in groups: ideas and perspectives},
   JOURNAL = {Bull. Amer. Math. Soc. (N.S.)},
  FJOURNAL = {American Mathematical Society. Bulletin. New Series},
    VOLUME = {52},
      YEAR = {2015},
    NUMBER = {3},
     PAGES = {357--413},
     
   MRCLASS = {20F69 (11B30 20D60 20F05 20G40)},
  MRNUMBER = {3348442},
MRREVIEWER = {Victor Petrogradsky},
       
}

@article{tao2008product,
AUTHOR = {Tao, Terence},
     TITLE = {The sum-product phenomenon in arbitrary rings},
   JOURNAL = {Contrib. Discrete Math.},
  FJOURNAL = {Contributions to Discrete Mathematics},
    VOLUME = {4},
      YEAR = {2009},
    NUMBER = {2},
     PAGES = {59--82},
   MRCLASS = {11B30 (11B75 16B99)},
  MRNUMBER = {2592424},
MRREVIEWER = {Nikolai Volodin},
}

@book {Tointonnotes2019,
    AUTHOR = {Tointon, Matthew C. H.},
     TITLE = {Introduction to approximate groups},
    SERIES = {London Mathematical Society Student Texts},
    VOLUME = {94},
 PUBLISHER = {Cambridge University Press, Cambridge},
      YEAR = {2020},
   MRCLASS = {20E07 (20F65)},
  MRNUMBER = {3971253},
}

@article{murphy2017products,
    AUTHOR = {Murphy, Brendan and Petridis, Giorgis},
     TITLE = {Products of difference over arbitrary finite fields},
   JOURNAL = {Discrete Anal.},
  FJOURNAL = {Discrete Analysis},
      YEAR = {2018},
     PAGES = {Paper No. 18, 42},
   MRCLASS = {11B13 (05B10 11B30)},
  MRNUMBER = {3877013},
MRREVIEWER = {Aled Walker},
}

@book {Freiman73,
    AUTHOR = {Freiman, G. A.},
     TITLE = {Foundations of a structural theory of set addition},
      NOTE = {Translated from the Russian,
              Translations of Mathematical Monographs, Vol 37},
 PUBLISHER = {American Mathematical Society, Providence, R. I.},
      YEAR = {1973},
   MRCLASS = {10J99 (10AXX)},
  MRNUMBER = {0360496},
}

@article{GR07,
 AUTHOR = {Green, Ben and Ruzsa, Imre Z.},
     TITLE = {Freiman's theorem in an arbitrary abelian group},
   JOURNAL = {J. Lond. Math. Soc. (2)},
  FJOURNAL = {Journal of the London Mathematical Society. Second Series},
    VOLUME = {75},
      YEAR = {2007},
    NUMBER = {1},
     PAGES = {163--175},
      
   MRCLASS = {20K99 (11P70)},
  MRNUMBER = {2302736},
MRREVIEWER = {Mei Chu Chang},
       
}

@article{helfgott2005growth,
  AUTHOR = {Helfgott, H. A.},
     TITLE = {Growth and generation in {${\rm SL}_2(\mathbb{ Z}/p\mathbb{ Z})$}},
   JOURNAL = {Ann. of Math. (2)},
  FJOURNAL = {Annals of Mathematics. Second Series},
    VOLUME = {167},
      YEAR = {2008},
    NUMBER = {2},
     PAGES = {601--623},
      
   MRCLASS = {20G40 (05C25 20F69)},
  MRNUMBER = {2415382},
MRREVIEWER = {Martin W. Liebeck},
       
}

@article{helfgott2008growth,
    AUTHOR = {Helfgott, H. A.},
     TITLE = {Growth in {${\rm SL}_3(\mathbb{ Z}/p\mathbb{ Z})$}},
   JOURNAL = {J. Eur. Math. Soc. (JEMS)},
  FJOURNAL = {Journal of the European Mathematical Society (JEMS)},
    VOLUME = {13},
      YEAR = {2011},
    NUMBER = {3},
     PAGES = {761--851},
      
   MRCLASS = {20G40 (05C25 11B75 20D60 20F65)},
  MRNUMBER = {2781932},
MRREVIEWER = {Martin W. Liebeck},
       
}

@article{breuillard2010linear,
author = {Breuillard, Emmanuel and Green, Ben and Tao, Terence},
year = {2010},
month = {01},
pages = {774},
title = {Linear Approximate Groups},
volume = {21},
journal = {Geom. Funct. Anal},
}

@incollection{pyber2010growth,
	Author = {Pyber, L{{\'a}}szl{{\'o}} and Szab{{\'o}}, Endre},
	Booktitle = {Thin groups and superstrong approximation},
	Pages = {253--268},
	Publisher = {Cambridge Univ. Press, Cambridge},
	Series = {Math. Sci. Res. Inst. Publ.},
	Title = {Growth in linear groups},
	Volume = {61},
	Year = {2014}}

@article{shkredov2019remarks,
   AUTHOR = {Shkredov, Ilya D.},
     TITLE = {Some remarks on products of sets in the {H}eisenberg group and
              in the affine group},
   JOURNAL = {Forum Math.},
  FJOURNAL = {Forum Mathematicum},
    VOLUME = {32},
      YEAR = {2020},
    NUMBER = {1},
     PAGES = {189--199},
   MRCLASS = {11B13 (11B75 20H20)},
  MRNUMBER = {4048461},
}

@online{shkredov2019modular,
    title={Modular hyperbolas and bilinear forms of Kloosterman sums},
    author={Ilya D. Shkredov},
    year={2019},
    eprint={1905.00291},
    archivePrefix={arXiv},
    primaryClass={math.NT}
}

@online{petridis2019energy,
    title={An Energy Bound in the Affine Group},
    author={Giorgis Petridis and Oliver Roche-Newton and Misha Rudnev and Audie Warren},
    year={2019},
    eprint={1911.03401},
    archivePrefix={arXiv},
    primaryClass={math.CO}
}

@Article{Rudnev2018,
AUTHOR = {Rudnev, Misha},
     TITLE = {On the number of incidences between points and planes in three
              dimensions},
   JOURNAL = {Combinatorica},
  FJOURNAL = {Combinatorica. An International Journal on Combinatorics and
              the Theory of Computing},
    VOLUME = {38},
      YEAR = {2018},
    NUMBER = {1},
     PAGES = {219--254},
   MRCLASS = {52C10 (51A45)},
  MRNUMBER = {3776354},
MRREVIEWER = {Mehdi Makhul},
       
}

@online{rudnev2018growth,
    title={On growth rate in $SL_2(\mathbf{F}_p)$, the affine group and sum-product type implications},
    author={Misha Rudnev and Ilya D. Shkredov},
    year={2018},
    eprint={1812.01671},
    archivePrefix={arXiv},
    primaryClass={math.CO}
}

@article{tointon2012freimans,
    AUTHOR = {Tointon, Matthew C. H.},
     TITLE = {Freiman's theorem in an arbitrary nilpotent group},
   JOURNAL = {Proc. Lond. Math. Soc. (3)},
  FJOURNAL = {Proceedings of the London Mathematical Society. Third Series},
    VOLUME = {109},
      YEAR = {2014},
    NUMBER = {2},
     PAGES = {318--352},
   MRCLASS = {11B30 (11P70 20F18)},
  MRNUMBER = {3254927},
MRREVIEWER = {Tom Sanders},
       
}

@article{tao2006product,
    AUTHOR = {Tao, Terence},
     TITLE = {Product set estimates for non-commutative groups},
   JOURNAL = {Combinatorica},
  FJOURNAL = {Combinatorica. An International Journal on Combinatorics and
              the Theory of Computing},
    VOLUME = {28},
      YEAR = {2008},
    NUMBER = {5},
     PAGES = {547--594},
   MRCLASS = {11B30 (11P70)},
  MRNUMBER = {2501249},
MRREVIEWER = {Ben Joseph Green},
      
}

@online{dona2019number,
    title={Number of directions determined by a set in $\mathbb{F}_{q}^{2}$ and growth in $\mathrm{Aff}(\mathbb{F}_{q})$},
    author={Daniele Dona},
    year={2019}, 
    archivePrefix={arXiv},
    eprint={1910.06752},
    primaryClass={math.CO}
}

@article {Gilhelf10,
    AUTHOR = {Gill, Nick and Helfgott, Harald Andr\'{e}s},
     TITLE = {Growth in solvable subgroups of {${\rm GL}_r(\mathbb{Z}/p\mathbb{Z})$}},
   JOURNAL = {Math. Ann.},
  FJOURNAL = {Mathematische Annalen},
    VOLUME = {360},
      YEAR = {2014},
    NUMBER = {1-2},
     PAGES = {157--208},
   MRCLASS = {20G40 (11B30 20D10)},
  MRNUMBER = {3263161},
MRREVIEWER = {Peter A. Brooksbank},
}

@article{BM17,
author = {Murphy, Brendan},
title = {Upper and lower bounds for rich lines in grids},
year = {2017},
pages = {},
journal = {Amer. J. of Math.},
}

\end{document}